\theoremstyle{remark}
\newtheorem*{remark}{Remark}
\theoremstyle{definition}
\newtheorem{definition}{Definition}[section]
\newtheorem{theorem}{Theorem}[section]
\newtheorem{lemma}[theorem]{Lemma}
\newcommand*{\defeq}{\mathrel{\vcenter{\baselineskip0.5ex \lineskiplimit0pt
			\hbox{\scriptsize.}\hbox{\scriptsize.}}}%
	=}
\renewcommand{\thefootnote}{\fnsymbol{footnote}}
\def\appendix#1{\addtocounter{section}{1}\setcounter{equation}{0}
	\renewcommand{\thesection}{\Alph{section}}
	\section*{Appendix \thesection\protect\indent \parbox[t]{11.15cm}{#1}}
	\addcontentsline{toc}{section}{Appendix \thesection\ \ \ #1}}
\newcommand{\pp}{=\kern-0.40em{\vert}}
\def\bbe{{\bf{e}}}
\font\mybb=msbm10 at 11pt
\def\bb#1{\hbox{\mybb#1}}
\def\bZ {\bb{Z}}
\def\bR {\bb{R}}
\def\bC {\bb{C}}
\def\vkappa{{\vec{\kappa}}}
\newcommand{\bea}{\begin{eqnarray}}
	\newcommand{\eea}{\end{eqnarray}}
\begin{document}

	\begin{center}
		\vspace*{-1.0cm}
		\begin{flushright}
		\end{flushright}

		
		\vspace{2.0cm} {\Large \bf  Derivations, holonomy groups and heterotic  geometry} \\[.2cm]
		
		\vskip 2cm
		 G.\,  Papadopoulos
		\\
		\vskip .6cm


		\begin{small}
			\textit{Department of Mathematics
				\\
				King's College London
				\\
				Strand
				\\
				London WC2R 2LS, UK}
			\\*[.3cm]
			\texttt{george.papadopoulos@kcl.ac.uk}
		\end{small}
		\\*[.6cm]

	\end{center}

	\vskip 2.5 cm

	\begin{abstract}
\noindent

	We investigate the superalgebra of derivations generated by the fundamental forms on manifolds with reduced structure group. In particular, we point out a relation between the algebra of derivations of heterotic geometries that admit Killing spinors and the commutator algebra  of holonomy symmetries in sigma models. We use this to propose a Lie bracket on the space of fundamental forms of all heterotic geometries  with a non-compact holonomy group and present the associated derivation algebras. We also explore the extension of these results to heterotic geometries  with compact holonomy groups and,  more generally, to manifolds with reduced structure group. A brief review of the classification of heterotic geometries that admit Killing spinors and an extension of this classification  to  some heterotic inspired geometries are also included.
	
	\end{abstract}

	

	\newpage
	
	\renewcommand{\thefootnote}{\arabic{footnote}}

\section{Introduction}

The set of derivations, $\mathfrak{Der}\big(\Omega^*({\mathcal M})\big)$, on the space of forms, $\Omega^*({\mathcal M})$, generated by vector valued forms, $\vec\Omega^*({\mathcal M})$, of a manifold ${\mathcal M}$ is a $\bZ$-graded superalgebra, see e.g. \cite{phgpw2}. As vector valued forms generate inner  and exterior derivations on  $\Omega^*({\mathcal M})$, these in turn induce   superalgebra structures on  $\vec\Omega^*({\mathcal M})$. In the former case, the superalgebra bracket is constructed using an inner derivation operation in $\vec\Omega^*({\mathcal M})$ while in the latter case the superalgebra bracket is given by the Nijenhuis tensor, $N(\cdot, \cdot)$, of two vector forms.  The Jacobi identities are satisfied as a consequence of those of $\mathfrak{Der}\big(\Omega^*({\mathcal M})\big)$.

On manifolds equipped with a metric $g$, either with Lorentzian or with Euclidean signature\footnote{The construction works with metrics of any signature but the focus will be on the Lorentzian and Euclidean signature manifolds.}, the superalgebra structures\footnote{From now on,  the labelling of various spaces involved with the underlying manifold may be suppressed at convenience, i.e we will denote $\vec\Omega^*({\mathcal M})$ with $\vec\Omega^*$ and so on.} on $\vec\Omega^*$ associated to the inner and exterior derivations can be induced on $\Omega^*$.  Indeed given a $k$-form $\chi\in \Omega^k$, one can construct a vector $(k-1)$-form, $\vec\chi$, as $g(Y, \vec\chi(X_1, \dots, X_{k-1}))\defeq i_Y\chi( X_1, \dots, X_{k-1})$, where $Y, X_1, \dots, X_{k-1}$ are vector fields on ${\mathcal M}$ and $i_Y$ is the inner derivation of $\chi$ with respect to  $Y$.
Denoting the brackets induced on $\Omega^*$ with $\cdot\bar\wedge \cdot$ and $N(\cdot, \cdot)$ associated to the inner and exterior derivations, respectively, the superalgebras $\big(\Omega^*, \cdot\bar\wedge \cdot\big)$ and $\big(\Omega^*, N(\cdot, \cdot)\big)$   are clearly infinite dimensional.

A refinement\footnote{This refinement has also applications in physics,  in particular string theory, and a brief description will be given below and in section \ref{sec:holsym}.} of the constructions above is possible for any manifold ${\mathcal M}$ equipped with a metric, $g$, which in addition exhibits a reduction of its structure group to a subgroup, ${\mathcal H}$, of the appropriate orthogonal group. Denoting with, $\nabla^{\mathcal H}$, a compatible connection whose holonomy group is ${\mathcal H}$, one can consider the set all the forms, $\Omega^*_{\nabla^{\mathcal H}}$,  which are $\nabla^{\mathcal H}$-covariantly constant\footnote{Similarly, we write $\Omega^*_{\nabla^{\mathcal H}}$ instead of $\Omega^*_{\nabla^{\mathcal H}}(\mathcal{M})$  unless the reference to the underlying manifold is necessary to avoid confusion.}.  It turns out that $\big(\Omega^*_{\nabla^{\mathcal H}}, \cdot\bar\wedge \cdot\big)$ is a superalgebra with respect to the bracket induced by inner derivations -- this is because the $\cdot\bar\wedge \cdot$-bracket of two $\nabla^{\mathcal H}$-covariantly constant forms is also $\nabla^{\mathcal H}$-covariantly constant. The superalgebra structure on $\big(\Omega^*_{\nabla^{\mathcal H}}, \cdot\bar\wedge \cdot\big)$ is universal in the sense that it is the same for all Gray-Hervella  \cite{GH}  classes of the ${\mathcal H}$-structure.

Unlike the superalgebra structure on $\Omega^*_{\nabla^{\mathcal H}}$ induced by inner derivations, there are some difficulties inducing a superalgebra structure on $\Omega^*_{\nabla^{\mathcal H}}$ using exterior derivations. One of them is closure. It is not a priori the case that the Nijenhuis tensor of two $\nabla^{\mathcal H}$-covariantly constant form is $\nabla^{\mathcal H}$-covariantly constant. There are also indications that in some cases closure will require an extension to include elements of $\vec\Omega^*_{\nabla^{\mathcal H}}$ that cannot be written as forms.  Note that  if closure holds and
$\big(\Omega^*_{\nabla^{\mathcal H}}, N(\cdot, \cdot)\big)$ is a superalgebra, it will depend on the Gray-Hervella  classes of the ${\mathcal H}$-structure.

Before we proceed further, it is more convenient to investigate the superalgebra structure of a smaller set than $\Omega^*_{\nabla^{\mathcal H}}$. As the wedge product of two $\nabla^{\mathcal H}$-covariantly constant forms is also $\nabla^{\mathcal H}$-covariantly constant, one can consider the subset of {\sl fundamental forms}  of ${\mathcal H}$, $\mathfrak{f}^{\mathcal H}$, in $\Omega^*_{\nabla^{\mathcal H}}$, see definition \ref{fforms}. Then,  $\Omega^*_{\nabla^{\mathcal H}}$ is generated as a ring from $\mathfrak{f}^{\mathcal H}$  with multiplication the wedge product.

One of the purposes of this paper is to explore the superalgebra structures on $\mathfrak{f}^{\mathcal H}$ in a number of examples that involve manifolds with reduced structure group and with either Euclidean or Lorentzian signature metrics.  The main focus will be on the superalgebra structure on $\mathfrak{f}^{\mathcal H}$ induced by the inner and exterior derivations. The closure, $\mathfrak{f}^{\mathcal H}_{\bar\wedge}$, of $\mathfrak{f}^{\mathcal H}$ under inner derivations $\bar\wedge$ can be easily determined and its properties follow from those of $\big(\Omega^*_{\nabla^{\mathcal H}}, \cdot\bar\wedge \cdot\big)$. The closure,  $\mathfrak{f}^{\mathcal H}_{N}$, of $\mathfrak{f}^{\mathcal H}$ under exterior derivations requires further exploration as that for $\big(\Omega^*_{\nabla^{\mathcal H}}, N(\cdot, \cdot)\big)$. One direction to proceed is to embark into a systematic investigation of Gray-Hervella classes of ${\mathcal H}$-structure and find those that imply the closure of $\mathfrak{f}^{\mathcal H}_{N}$.

Further progress can be made after a detailed investigation of the holonomy symmetries of sigma models with target spaces
heterotic (inspired) geometries. These symmetries have been introduced in the context of string theory \cite{phgpw1, phgpw2} following earlier work in \cite{odake, dvn}.  The heterotic (inspired) geometries are equipped with a metric connection, $\hat\nabla$, with skew-symmetric torsion, $H$, and have extensively been investigated both in physics and mathematics literature; for some selective publications see  \cite{cmhew, strominger, hkt, poon,  stefang2,  salamon1, gauntlett, lustb, green}.  The holonomy group, ${\mathcal H}$, of $\hat\nabla$ can be either compact or non-compact.  The reason that these are useful in the current context is that the commutator of two holonomy symmetries, which are generated by elements of  $\mathfrak{f}^{\mathcal H}$, is also a symmetry of the sigma model and so closure in guaranteed.
Moreover, in the commutator of two holonomy symmetries generated by  $\phi_1, \phi_2\in \mathfrak{f}^{\mathcal H}$, the Nijenhuis tensor, $N(\vec\phi_1, \vec\phi_2)$, appears. As a result, there is a relation between the commutator of two holonomy symmetries and that of the exterior derivations generated by $\vec\phi_1$ and $\vec\phi_2$. Following this paradigm, we find that the commutator of two holonomy symmetries generated by the fundamental forms of a non-compact holonomy group is determined in terms of a new inner derivation operation on $\mathfrak{f}^{\mathcal H}$, $\bar\curlywedge$, see theorem \ref{barcwedge} and \cite{gpjp1}. This leads  to the new algebra $\mathfrak{f}^{\mathcal H}_{\bar\curlywedge}$.  We determine the $\mathfrak{f}^{\mathcal H}_{\bar\curlywedge}$ algebras for all heterotic geometries as well as some heterotic inspired ones with non-compact holonomy groups.  The commutators of two holonomy symmetries generated by elements of $\mathfrak{f}^{\mathcal H}$, for ${\mathcal H}$ compact, are also explored. We indicate how the closure of this commutator  can be used to define a new bracket  on an appropriate extension of $\mathfrak{f}^{\mathcal H}$. This may also be extended to other $\nabla^{\mathcal H}$ connections.

The main new results described  this paper are presented in theorems   \ref{thm3}, \ref{thm2}, \ref{prin2}, \ref{ncomalg1} and \ref{ncomalg2}.  Some other key results have also been reviewed. These include those described in the theorems  \ref{ncom}, \ref{thm1}, \ref{thlms} and \ref{ncomcom}. The classification of heterotic geometries that admit Killing spinors mentioned in theorems  \ref{ncom} and  \ref{thm1} has been both refined and simplified to make it  suitable for the purpose of this work.

As in what follows we shall extensively use  the geometry of  heterotic backgrounds which admit Killing spinors, i.e. supersymmetric heterotic backgrounds,  we have included a brief description of their classification.    This remains the only theory in ten dimensions that such a classification has been achieved.  The main reason for this is that the parallel transport equation of the Killing spinor equations  (KSEs) is associated with the connection $\hat\nabla$.  Moreover,   the spinor bundle is associated to  the 16-dimensional Majorana-Weyl representation of $\mathrm{Spin}(9,1)$ that is the double cover of $SO(9,1)$.  This is a chiral real representation of $\mathrm{Spin}(9,1)$. As a result, the Killing spinors, which are parallel with respect to $\hat\nabla$, can be identified using group representation theory. In turn,  the holonomy group of $\hat\nabla$ can be determined in each case as the  isotropy group of parallel spinors in $\mathrm{Spin}(9,1)$.  After identifying the $\hat\nabla$-parallel spinors, i.e. the solutions of the gravitino KSE, one can proceed to solve the remaining dilatino and gaugino KSEs.  We also present an extension of the classification mentioned above to heterotic inspired geometries in eight dimensions.

This paper is organised as follows. In section 2, we shall summarise the main properties in the theory of derivations that we shall explore later, and introduce a new inner derivation $\bar\curlywedge$. In section 3, we shall summarise the classification of the geometry of supersymmetric heterotic  backgrounds. In section 4, we extend the classification results of section 3 to some heterotic inspired geometries. In section five, we present the commutator of two holonomy symmetries for 1-dimensional sigma models and present the algebra of fundamental forms, $\mathfrak{f}^{\mathcal H}_{\bar\curlywedge}$, for heterotic and heterotic inspired geometries with non-compact holonomy groups. In section 6, we give our conclusions.

\section{Derivations}

\subsection{Derivations and superalgebras}

Let $D_\ell$ be a derivation of degree $\ell$ on the space of forms, $\Omega^*=\Omega^*(\mathcal{M})$, of a manifold ${\mathcal M}$, i.e. $D_\ell: \Omega^p\rightarrow \Omega^{p+\ell}$, such that
\begin{enumerate}[(i)]

\item   $D_\ell(a\psi+ b\chi)=a D_\ell\psi+b D_\ell\chi$,   $\forall a,b\in \bR$ and $\psi, \chi\in \Omega^*$, and

 \item $D_\ell(\psi\wedge \chi)=D_\ell\psi\wedge \chi+(-1)^{p\ell} \psi\wedge D_\ell\chi$~, where $\psi\in \Omega^p$ ~ {\small (Leibnitz property)}.

 \end{enumerate}
 Given two derivations $D_\ell, D_r\in {\mathfrak Der}(\Omega^*)$, one can define a commutator
\bea
[D_\ell, D_r]\defeq D_\ell D_r-(-1)^{r\ell} D_r D_\ell~.
 \eea
 This commutator satisfies the (super-)Jacobi identity and turns ${\mathfrak Der}(\Omega^*)$ into an infinite dimensional $\bZ$-graded superalgebra.

Given a vector(-valued) $\ell$-form $L\in \vec\Omega^{\ell}$, one can define two  derivations.  One is the inner derivation $i_L$.  This is defined as $i_L f=0$ and $i_L\omega=\omega(L)$ on the space of 0- and 1-degree forms, respectively. Then it can be easily extended\footnote{In a coordinate basis, $i_L\phi={1\over \ell! (p-1)!} L^\mu{}_{\nu_1\dots \nu_\ell} \phi_{\mu \nu_{\ell+1}\dots \nu_{\ell+p-1}} dx^{\nu_1\dots \nu_{\ell+p-1}}$, where $\phi\in \Omega^p$ and we have used a shorthand notation $dx^{\nu_1\dots \nu_{\ell+p-1}}=dx^{\nu_1}\wedge\dots \wedge dx^{\nu_{\ell+p-1}}$ to denote the standard coordinate basis in the space of forms.} to $\Omega^*$ using the Leibnitz property. The other is
$d_L\defeq i_L d+ (-1)^\ell d i_L$, where $d$ is the usual exterior derivative on $\Omega^*$. Clearly, $d_L$ is a generalisation of the Lie derivative along a  vector field. It is also a generalisation of $d$ for which $L$ is the identity vector 1-form, i.e $L(X)=X$ for every vector field $X$ on ${\mathcal M}$.   It turns out that the commutators of these derivations, see e.g. \cite{phgpw2}, are
\bea
&&[d_L, d_M]= d_{N(L, M)}~,~~~[i_L, d_M]=d_{L\barwedge M}+(-1)^m i_{N(L,M)}~,~~~
\cr
&&[i_L, i_M]=i_{L\barwedge M}+(-1)^{\ell+m+m\ell} i_{M\barwedge L}~,
\label{exincom}
\eea
where $N(L, M)$ is a generalisation of the Nijenhuis tensor for almost complex structures and $L\barwedge M\in \vec\Omega^{\ell+m-1}$ such that $\omega(L\barwedge M)\defeq i_L (\omega(M))$ for every $\omega\in \Omega^1$. An explicit formula for $N(L, M)$ is given in section \ref{sec:holsym}. Notice that $d d_L=(-1)^\ell d_L d$.

The exterior derivation map $D: \vec\Omega^{\ell}\rightarrow {\mathfrak Der}(\Omega^*)$, which associates $L$ to $d_L$,  is a linear map and 1-1. Requiring that $D$ is a Lie superalgebra homomorphism, the commutator  of two exterior derivatives in (\ref{exincom}) can be used to  introduce a bracket on $\vec\Omega^*$ given by $N(\cdot, \cdot)$. The inner derivation map $i: \vec\Omega^{\ell}\rightarrow {\mathfrak Der}(\Omega^*)$, which associates $L$ to $i_L$, is also a linear map and 1-1. Therefore, one can associate a bracket on $\vec\Omega^*$ given by the right hand side of the last equation in (\ref{exincom}).

Next suppose that $L=\vec\lambda$ and $M=\vec \chi$, where $\lambda\in \Omega^{\ell+1}$ and $\chi\in \Omega^{m+1}$. In such a case, the last equation in (\ref{exincom}) becomes
 \bea
 [i_{\vec \lambda}, i_{\vec\chi}]=(-1)^{\ell+1} i_{\overrightarrow{\lambda\barwedge \chi}}~,
 \eea
 where $\lambda\barwedge \chi\defeq i_{\vec\lambda}\chi$.
As a result, the inner derivations induce a bracket $\cdot\bar\wedge\cdot$ on $\Omega^*$ associated with the  operation $\barwedge$. This  turns $\Omega^*$ into a infinite dimensional superalgebra.

 Suppose that a manifold ${\mathcal M}^n$ admits a reduction of its structure group to a subgroup ${\mathcal H}$ of $SO_n$, where $SO_n$ stands for either $SO(n)$ or $SO(n-1,1)$.

  \begin{definition}\label{fforms}
$\mathfrak{f}^{\mathcal H}$ is the set of fundamental forms of the  ${\mathcal H}$-structure, iff the $\nabla^{\mathcal H}$-covariant constancy of the elements of $\mathfrak{f}^{\mathcal H}$
imply that the holonomy group of $\nabla^{\mathcal H}$ is  ${\mathcal H}$ and  $\Omega^*_{\nabla^{\mathcal H}}$ is generated as a ring from $\mathfrak{f}^{\mathcal H}$ with multiplication the wedge product. \hfill  	$\vartriangle$
\end{definition}

   As $\barwedge$-product of two $\nabla^{\mathcal H}$-covariantly constant forms is covariantly constant, one can consider the closure of the set $\mathfrak{f}^{\mathcal H}$ under the bracket $\cdot\barwedge\cdot$ operation. This will yield the (super){\it algebra} of fundamental forms of the holonomy group ${\mathcal H}$ that we shall  denote with $\mathfrak{f}_{\bar\wedge}^{\mathcal H}$. Typically, $\mathfrak{f}^{\mathcal H}$ is included in $\mathfrak{f}_{\bar\wedge}^{\mathcal H}$ as the latter may require elements from $\Omega^*_{\nabla^{\mathcal H}}$  for closure, which are not included in $\mathfrak{f}^{\mathcal H}$.  This will be demonstrated by explicit examples below. As it has already been mentioned, the superalgebra structure on $\mathfrak{f}_{\bar\wedge}^{\mathcal H}$  is universal in the sense that it will be the same for all Gray-Hervella classes of the ${\mathcal H}$-structure.

 One can also consider the closure properties of  $\mathfrak{f}^{\mathcal H}$ with respect to the Nijenhuis bracket $N(\cdot, \cdot)$  associated with the commutator of two exterior derivations. As it has already been mentioned, it is not immediately the case that  closure holds as the Nijenhuis tensor of two $\nabla^{\mathcal H}$-covariantly constant forms may not be $\nabla^{\mathcal H}$-covariantly constant -- closure can also depend on the choice of connection $\nabla^{\mathcal H}$. Supposing that such a closure is well defined, the superalgebra structure of $\mathfrak{f}^{\mathcal H}_N$
  will  be sensitive to the Gray-Hervella classes of the ${\mathcal H}$-structure.   We shall demonstrate this with examples.

  {\it Example 1:} For all Berger manifolds the Nijenhuis tensors of all fundamental forms vanish. Therefore, $\mathfrak{f}^{\mathcal H}_N$ is abelian.

   {\it Example 2:} Consider a manifold ${\mathcal M}$ equipped with an almost complex structure $I$ and with a compatible metric connection with skew-symmetric torsion $\hat\nabla$, $\hat\nabla I=0$. One can demonstrate that the Nijenhuis tensor $N(I,I)$ of  $I$ is associated to a 3-form and is $\hat\nabla$-covariantly constant \cite{dvn}. The superalgebra $\mathfrak{f}^{\mathcal H}_N$ is not abelian and it  is given by the relations $[d_I, d_I]=d_N$ and $[d_I, d_N]=0$.  The latter commutator is a consequence of the Jacobi identities. The superalgebra is isomorphic to the $N=1$ supersymmetry algebra, $\mathfrak{s}(1)$, in one dimension with $d_N$ identified with the hamiltonian and $d_I$ with the supersymmetry generator, respectively. If ${\mathcal M}$ is 6-dimensional,  ${\mathcal H}$ can be a subgroup of $SU(3)$. An explicit example of a manifold that admits such a structure is the 6-sphere.

\begin{remark}

If one identifies both $\mathfrak{f}^{\mathcal H}_{\bar\wedge}$ and $\mathfrak{f}^{\mathcal H}_N$, it is straightforward to determine the superalgebra of derivations, $\mathfrak{Der}\big(\Omega_{\nabla^{{\mathcal H}}}^*\big)$ generated by the $\nabla^{{\mathcal H}}$-covariantly constant forms.  This is a consequence of the commutators of inner and exterior derivations in (\ref{exincom}).

\end{remark}

\subsection{Inner derivations for null forms}\label{sec:null}

\begin{definition}
Let ${\mathcal M}$ be a Lorentzian manifold with metric $g$ equipped with a null nowhere vanishing 1-form, $\kappa$, $g(\vkappa, \vkappa)=0$.
 A form $\phi$ on ${\mathcal M}$ is null along $\kappa$, iff
\bea
\kappa\wedge \phi=0~,~~~\kappa\bar\wedge\phi=0~.
\label{nullcon}
\eea
Moreover, denote the space of all such forms with $\Omega^*_\kappa({\mathcal M})$. \hfill  	$\vartriangle$
\end{definition}

To describe the elements of $\Omega^*_\kappa({\mathcal M})$, a decomposition is required of the forms along the direction of $\kappa$ and directions  transverse to $\kappa$. However, it is well known feature of Lorentzian geometry that there is an ambiguity in decomposing the tangent space of a Lorentzian manifold in directions along and transverse to a null vector field.  To describe this ambiguity in the present context observe that the existence of a null nowhere vanishing null 1-form $\kappa$ on ${\mathcal M}^n$ reduces the structure from $SO(n-1,1)$ to the subgroup of $SO(n-2)\ltimes \bR^{n-2}$ that stabilises $\kappa$. Then, we introduce a local trivialisation $\{U_\alpha\}_{ \alpha \in I}$ of $T{\mathcal M}^n$ and a local pseudo-orthonormal frame $(\bbe_\alpha^-, \bbe_\alpha^+, \bbe_\alpha^i; i=1,\dots, n-2)$ such that $\bbe_\alpha^-=\kappa_\alpha$ and $g_\alpha=2 \bbe_\alpha^-\bbe_\alpha^++\delta_{ij}\bbe_\alpha^i\bbe_\alpha^j$. Of course $\bbe^-_\alpha=\bbe^-_\beta$ and $g_\alpha=g_\beta$ at the intersection $U_\alpha\cap U_\beta$ of two open sets $U_\alpha$ and $U_\beta$, but
\bea
\bbe^+_\alpha= \bbe_\beta^+-{1\over2} w_{\alpha\beta}^2 \bbe_\beta^-+(w_{\alpha\beta})_i\bbe_\beta^i~,~~~ \bbe_\alpha^i= (O_{\alpha\beta}^{-1})^i{}_j (\bbe_\beta^j- w_{\alpha\beta}^j \bbe_\beta^-)~,
\label{patch}
\eea
where $(O_{\alpha\beta}, w_{\alpha\beta})\in SO(n-2)\ltimes\bR^{n-2}$ parameterise the isotropy subgroup of $SO(n-1,1)$ that stabilises $\kappa$ and depend on the points of $U_\alpha\cap U_\beta$. Clearly if the metric is decomposed along the lightcone and transverse to the lightcone directions on $U_\alpha$, the components of such decomposition are not preserved by the patching condition (\ref{patch}).  The task is to describe the geometry of the spacetime in a way that it is independent from the choice of the decomposition.  Also note that if the structure group further  reduces to ${\mathcal K}\ltimes\bR^{n-2}$, then the patching condition  for the
pseudo-orthonormal frame is as in (\ref{patch}) with $(O_{\alpha\beta}, w_{\alpha\beta})\in {\mathcal K}\ltimes\bR^{n-2}$.

To describe the elements of $\Omega^*_\kappa({\mathcal M}^n)$ on a patch $U_\alpha$ use  the coframe $(\bbe_\alpha^-, \bbe_\alpha^+, \bbe_\alpha^i; i=1,\dots n-2)$ above and  observe that the conditions, $\kappa\wedge \phi=\kappa\bar\wedge \phi=0$, imply that
\bea
\phi_\alpha={1\over \ell!}(\phi_\alpha)_{-i_1\dots i_\ell}\, \bbe^-\wedge \bbe^{i_1}\wedge \dots \wedge \bbe^{i_\ell}\equiv {1\over \ell!}(\phi_\alpha)_{i_1\dots i_\ell}\, \bbe^{-i_1 \dots i_\ell}~,
\label{exL}
\eea
where it has been used that $\kappa=\bbe^-$ is nowhere vanishing.
Therefore, one can associate to every $\phi\in \Omega^*_\kappa({\mathcal M}^n)$ a collection of forms $\{\breve \phi_\alpha\}_{\alpha\in I}$ each defined on the open set $U_\alpha$ of ${\mathcal M}^n$ such that
\bea
\breve \phi_\alpha={1\over \ell!}(\phi_\alpha)_{i_1\dots i_\ell}  \bbe^{i_1\dots i_\ell}~.
\label{tphi}
\eea
  This collection   $\breve \phi=\{\breve \phi_\alpha\}$ is not a globally defined form on ${\mathcal M}^n$. Instead, at the intersection $U_\alpha\cap U_\beta$ of two open sets $U_\alpha$ and $U_\beta$,
\bea
\breve \phi_\alpha= \breve \phi_\beta+ \bbe^-\wedge \breve \chi_{\alpha\beta}~,
\label{patch2}
\eea
where $\breve \chi_{\alpha\beta}={1\over (\ell-1)!}(\chi_{\alpha\beta})_{i_1\dots i_{\ell-1}}  \bbe^{i_1\dots i_{\ell-1}}$. One can establish this using $\phi_\alpha=\bbe^- \wedge \breve \phi_\alpha=\bbe^- \wedge \breve \phi_\beta=\phi_\beta$ on $U_\alpha\cap U_\beta$ and that $\bbe^-$ is nowhere vanishing on ${\mathcal M}^n$. Clearly, there is a 1-1 correspondence between the collections $\breve \phi$ and null forms $\phi$ along $\kappa$. This correspondence is described below as $\phi$ {\sl is  represented} by $\breve\phi$, or equivalently, as $\breve\phi$ {\sl represents} $\phi$.

\begin{remark}
The operation $\kappa\wedge$  is cohomology operation on the space of forms. The computation presented above is based on the local triviality of $\kappa\wedge$.
\end{remark}

\begin{theorem}\label{barcwedge}
Let $\lambda, \phi\in \Omega^*_\kappa({\mathcal M}^n)$.  Define on $U_\alpha$
\bea
{\lambda\bar{\curlywedge} \phi}\defeq \bbe^-\wedge i_{\vec{\breve \lambda}} \breve \phi~,~~~ \lambda\curlywedge \phi\defeq \lambda\wedge \breve \phi~,~~~
\label{twoop}
\eea
where  $\breve \lambda$ and $\breve \phi$ are defined as in (\ref{tphi}), respectively, and we have suppress the open set labelling. Then, both $\lambda\bar{\curlywedge} \phi$ and $\lambda\curlywedge \phi$ are globally defined forms on ${\mathcal M}^n$.

Moreover, if $\kappa$, $\lambda$ and $\phi$ are covariantly constant with respect to a metric connection $\nabla^{\mathcal H}$, i.e. ${\mathcal H}\subseteq SO(n-2)\ltimes \bR^{n-2}$,  then $\lambda\curlywedge \phi$ and $\lambda\bar{\curlywedge} \phi$ are also $\nabla^{\mathcal H}$-covariantly constant.
\end{theorem}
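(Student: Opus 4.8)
The plan is to treat $\breve\lambda$ and $\breve\phi$ as representatives of classes modulo $\bbe^-\wedge\Omega^*$: by (\ref{patch2}) the only ambiguity in the collection $\{\breve\phi_\alpha\}$ at overlaps is a shift $\breve\phi\mapsto\breve\phi+\bbe^-\wedge(\cdots)$, and the same holds for $\{\breve\lambda_\alpha\}$. Hence, to prove that $\lambda\curlywedge\phi$ and $\lambda\bar{\curlywedge}\phi$ are globally defined it suffices to show that both expressions in (\ref{twoop}) are unchanged when either $\breve\lambda$ or $\breve\phi$ is shifted by an element of $\bbe^-\wedge\Omega^*$. For $\lambda\curlywedge\phi=\lambda\wedge\breve\phi$ this is immediate: writing $\lambda=\bbe^-\wedge\breve\lambda$ on a patch gives $\lambda\wedge\bbe^-=0$, so $\lambda\wedge(\bbe^-\wedge\breve\chi)=0$ and the expression depends only on the class of $\breve\phi$; it is trivially independent of $\breve\lambda$.

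For $\lambda\bar{\curlywedge}\phi=\bbe^-\wedge i_{\vec{\breve\lambda}}\breve\phi$ I would isolate two elementary identities. First, since $\breve\lambda$ has purely transverse indices in the frame $(\bbe^-,\bbe^+,\bbe^i)$, raising an index produces a vector-valued form $\vec{\breve\lambda}$ whose vector part is transverse, so $i_{\vec{\breve\lambda}}\bbe^-=\bbe^-(\vec{\breve\lambda})=0$. As $i_{\vec{\breve\lambda}}$ is a derivation of the wedge product, shifting $\breve\phi\mapsto\breve\phi+\bbe^-\wedge\breve\chi$ changes $i_{\vec{\breve\lambda}}\breve\phi$ only by a term proportional to $\bbe^-\wedge i_{\vec{\breve\lambda}}\breve\chi$, which is killed by the overall $\bbe^-\wedge$. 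Second, for a shift $\breve\lambda\mapsto\breve\lambda+\bbe^-\wedge\breve\xi$, I would note in components that raising the $-$ index of $\bbe^-\wedge\breve\xi$ yields a vector in the direction $(\bbe^-)^\sharp$ whose contraction with the transverse form $\breve\phi$ vanishes, while raising a transverse index leaves the factor $\bbe^-$ intact; hence $i_{\overrightarrow{\bbe^-\wedge\breve\xi}}\breve\phi\in\bbe^-\wedge\Omega^*$ and is again annihilated by $\bbe^-\wedge$. These two facts show $\lambda\bar{\curlywedge}\phi$ depends only on the classes of $\breve\lambda$ and $\breve\phi$, so it glues to a global form; the local triviality of $\bbe^-\wedge$ noted in the preceding remark guarantees consistency on overlaps.

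For the second statement I would first record the key consequence of covariant constancy. Since $\nabla^{\mathcal H}$ is metric and $\bbe^-=\kappa$ is $\nabla^{\mathcal H}$-parallel, differentiating $\phi=\bbe^-\wedge\breve\phi$ and using $\nabla^{\mathcal H}\phi=0$ gives $\bbe^-\wedge\nabla^{\mathcal H}_X\breve\phi=0$, so by local triviality $\nabla^{\mathcal H}_X\breve\phi\in\bbe^-\wedge\Omega^*$, and likewise $\nabla^{\mathcal H}_X\breve\lambda\in\bbe^-\wedge\Omega^*$. For $\lambda\curlywedge\phi$ we then get $\nabla^{\mathcal H}_X(\lambda\wedge\breve\phi)=\lambda\wedge\nabla^{\mathcal H}_X\breve\phi\in\lambda\wedge\bbe^-\wedge\Omega^*=0$. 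For $\lambda\bar{\curlywedge}\phi$, using $\nabla^{\mathcal H}\bbe^-=0$, metric compatibility (so that $\nabla^{\mathcal H}$ commutes with raising indices and with the contraction $i$), and the Leibniz rule, I would write $\nabla^{\mathcal H}_X(\lambda\bar{\curlywedge}\phi)=\bbe^-\wedge\big(i_{\overrightarrow{\nabla^{\mathcal H}_X\breve\lambda}}\breve\phi+i_{\vec{\breve\lambda}}\nabla^{\mathcal H}_X\breve\phi\big)$ and observe that each of the two terms is of the form treated in the two identities above (with $\breve\xi,\breve\chi$ replaced by the parallel-transport remainders), hence vanishes.

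The main obstacle is the well-definedness of $\bar{\curlywedge}$, i.e. the two annihilation identities for $\bbe^-\wedge i_{\vec{\breve\lambda}}(\bbe^-\wedge\,\cdot\,)$ and $\bbe^-\wedge i_{\overrightarrow{\bbe^-\wedge\,\cdot}}\breve\phi$. The delicate point is that passing to the vector-valued form $\vec{\,\cdot\,}$ does not distribute over the wedge product, so the second identity must be checked at the level of components, keeping track of which raised index lands in the transverse block $\delta^{ij}$ and which in the off-diagonal $g^{-+}$ block; the outer factor $\bbe^-\wedge$ together with the purely transverse character of $\breve\phi$ are exactly what make both contributions drop out. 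Once these identities are established, both the gluing and the covariant constancy follow by the same mechanism, so no further input is needed.
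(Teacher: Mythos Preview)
Your proof is correct and follows the same approach that the paper sketches: the paper simply states that ``all statements of the theorem follow after a direct computation'' and that the global definition ``follows upon applying the patching condition (\ref{patch}),'' whereas you have spelled out that computation in detail by working modulo $\bbe^-\wedge\Omega^*$ via (\ref{patch2}) and verifying the two annihilation identities. Your treatment of the covariant constancy, reducing it to the same identities after observing $\nabla^{\mathcal H}_X\breve\phi\in\bbe^-\wedge\Omega^*$, is a clean way to organise the ``direct computation'' and requires no further input beyond what you have written.
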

\begin{proof}
The  operations $\curlywedge$ and $\bar\curlywedge$ have arisen after an investigation of the commutator (\ref{walg}) of holonomy symmetries in sigma models, see \cite{gpjp1}.  All statements of the theorem follow after a direct computation. The global definition of $\lambda\bar{\curlywedge} \phi$ and $\lambda\curlywedge \phi$ follows upon applying
the patching condition (\ref{patch}).
\end{proof}

\begin{remark}
Many examples of the Lorentzian geometry as that described above will be given in the context of heterotic geometries, see section \ref{sec:nullhetgeom}.  These are 10-dimensional geometries but they can be easily generalised to all dimensions.  More generally, geometries that admit $\nabla^{\mathcal H}$-covariantly constant forms that are null along $\kappa$ arise in the context of null G-structures \cite{null}, which have recently been investigated in \cite{alekseevsky, fino3, bruce, murcia}.   Such  geometries in all dimensions will be constructed in section \ref{lorexn}.

\end{remark}

\begin{remark}

The existence of the operation $\bar{\curlywedge}$ given in (\ref{twoop}), which can be shown to satisfy the Jacobi identity, allows to turn $\Omega^*_\kappa$ into an (infinite dimensional) superalgebra with bracket $\cdot \bar{\curlywedge} \cdot$.  Note that $\Omega^*_\kappa$ is abelian with respect to the usual bracket $\cdot \bar{\wedge} \cdot$.   Furthermore, as a consequence of theorem \ref{barcwedge}, the space of $\nabla^{\mathcal H}$-covariantly constant forms, which are null along $\kappa$,
$\Omega^*_{\kappa, \nabla^{\mathcal H}}$, is also a superalgebra with bracket $\cdot \bar{\curlywedge} \cdot$. As the $\curlywedge$-product of two $\nabla^{\mathcal H}$-covariantly constant forms is $\nabla^{\mathcal H}$-covariantly constant, one can define the algebra of fundamental forms, $\mathfrak {f}^{\mathcal H}_{\bar{\curlywedge}}$, in analogy  to that of $\mathfrak {f}^{\mathcal H}_{\bar{\wedge}}$.  Then, $\mathfrak {f}^{\mathcal H}_{\bar{\curlywedge}}$ generates $\Omega^*_{\kappa, \nabla^{\mathcal H}}$ as a ring  with multiplication $\curlywedge$. Note that the standard $\wedge$-product of any two elements in $\Omega^*_{\kappa}$ vanishes.

\end{remark}

\section{Heterotic geometry}

\subsection{Preliminaries}

To simplify the analysis that  follows, we make the following assumptions. All manifolds we  consider are smooth, oriented, spin  and simply connected. If the latter is not the case, one can take their universal cover.   Lorentzian signature manifolds are also restricted to be time-oriented and so their structure group reduces to the connected component of the Lorentz group.

As it has already been mentioned, in heterotic geometry, manifolds ${\mathcal M}^n$, with Euclidean  or Lorentzian signature, are equipped  with a metric  $g$ and a 3-form $H$.  As a result, one can define the metric connection, $\hat\nabla$, with skew symmetric torsion, $H$, as
 \bea
\hat\nabla_X Y\defeq\nabla_X Y+{1\over2} \vec H(X,Y)~,
\label{hcon}
\eea
 where $X,Y$ are vector fields on ${\mathcal M}^n$.

 One of the Killing spinor equations that arises in heterotic theory, which can be generalised to any manifold, is the parallel transport equation $\hat\nabla_X\epsilon=0$, where now $\hat\nabla_X$ is the connection induced on the spin bundle $S$ over ${\mathcal M}^n$ and $\epsilon$ is a section of $S$. The typical  representations, $\Delta$, considered in physics associated to $S$ are the irreducible real or complex  spinor representations of $\mathrm{Spin}_n$ as well as some reducible ones that include the Dirac representation.

A priori the holonomy group of $\hat\nabla$ is in $\mathrm{Spin}_n$. However, the existence of parallel spinors $\epsilon$, $\hat\nabla_X\epsilon=0$, will reduce the holonomy group to a subgroup of the isotropy group ${\mathcal H}$ of the parallel spinors in $\mathrm{Spin}_n$.  Therefore, to identify all holonomy groups of manifolds with such parallel spinors, one should find all subgroups ${\mathcal H}$ of $\mathrm{Spin}_n$ such that the module $\Delta$  associated to the spinor bundle, under investigation, includes copies of the trivial module after a decomposition into representations of ${\mathcal H}$.  Each copy of the trivial module of ${\mathcal H}$ in the decomposition of $\Delta$ corresponds to an additional parallel spinor. To do this, one begins with an analysis of the orbits of $\mathrm{Spin}_n$ in $\Delta$. The isotropy group of every orbit will preserve at least one spinor.  This identifies all possible holonomy groups ${\mathcal H}$ that allow for at least one parallel spinor. Then the orbits of ${\mathcal H}$ in $\Delta$ are further examined.  Their isotropy groups get identified and these will be the holonomy groups that allow for the existence of at least two parallel spinors and so on.  The procedure can be carried out to exhaustion. In practice, as the growth of dimension of $\mathrm{Spin}_n$ is polynomial in $n$ while that of $\Delta$ is exponential, this investigation becomes increasingly involved as $n$ increases, especially in the Lorentzian case.  As a result, it has been carried out only in a few cases for small $n$.

A concrete prescription of the Killing spinors is possible by choosing a representative for each trivial module of the holonomy group ${\mathcal H}$ in $\Delta$. In turn, this specifies the embedding of ${\mathcal H}$ in $\mathrm{Spin}_n$. An economical way to achieve this is to choose a realisation of $\Delta$ in terms of forms. This has the advantage that the Killing spinors are explicitly written down.  A similar description has been chosen by the author of  \cite{wang} to give the parallel spinors of Berger manifolds.


\subsection{Killing spinors and geometry}

\subsubsection{Killing spinors}

\begin{table}
\centering
\begin{tabular}{|c|c|c|}
\hline

$L$ & ${\mathcal K}$ & ${\mathrm {parallel~~spinors}}$
 \\
 \hline\hline
$1$ & $Spin(7)$& $1+e_{1234}$ \\
\hline
$2$ &  $SU(4)$&$ 1$
\\
\hline
$3$ & $Sp(2)$&$1,\,\, i(e_{12}+e_{34})$
\\
\hline
$4$ & $Sp(1)\times Sp(1)$&$1,\,\, e_{12}$
\\
\hline
$5$ & $Sp(1)$&$1,\,\, e_{12},\,\, e_{13}+e_{24}$
\\
\hline
$6$ &$U(1)$&$1,\,\, e_{12},\,\, e_{13}$
\\
\hline
$8$ &  $\{1\}$&$ 1,\,\, e_{12},\,\, e_{13},\,\, e_{14}$
\\
\hline
\end{tabular}
\vskip 0.2cm
\caption{\label{tablehola}  {\small
In the  columns are listed the number of  $\hat\nabla$-parallel spinors,   their isotropy groups in $Spin(9,1)$ and their representatives, respectively.
The $\hat\nabla$-parallel spinors are  always real. So if a complex spinor is listed  as a representative,  it is understood that one should
always take its real and imaginary parts.} }
\end{table}

Supersymmetric backgrounds are manifolds ${\mathcal M}$ equipped with a metric $g$ and possibly other fields, which can include forms or connections,  that  admit  Killing spinors, i.e.   solutions to the  Killing spinor equations (KSEs). These equations are the vanishing conditions of the supersymmetry transformations of the fermions of  supergravity theories and always include a parallel transport equation associated with supersymmetry variation of the gravitino.  The classification programme of supersymmetric backgrounds involves the identification of the conditions on the geometry of spacetime ${\mathcal M}$, and  those on the other fields, such that the KSEs admit non-trivial solutions.  Supersymmetric backgrounds have found many applications in theoretical physics,  string theory and differential geometry and include the well known instantons and solitons of gauge theories as well as the Berger manifolds that admit parallel spinors.

In the heterotic theory apart from the spacetime metric $g$ and the 3-form field strength $H$, other fields include the dilaton, $\Phi$, and the curvature, $F$, of a connection with gauge group that we shall not specify here. The dilaton is a real function on the spacetime, which is a 10-dimensional manifold, ${\mathcal M}^{10}$, with Lorentzian signature. The spinor bundle, $S^+$, over ${\mathcal M}^{10}$ is associated with the 16-dimensional positive chirality real irreducible module,  $\Delta_{\bf 16}^+$, of $\mathrm{Spin}(9,1)$.  The KSEs are
\bea
 \hat\nabla_X\epsilon=0~,~~~
 ({\slashed d\Phi}-{1\over12} {\slashed H})\epsilon=0~,~~~
 {\slashed F} \epsilon=0~,
\label{kse}
\eea
where now $\hat\nabla$ is the connection induced on $S^+$ from the tangent bundle connection $\hat\nabla$ in (\ref{hcon}), ${\slashed \omega}$ is the Clifford algebra element associated to the form $\omega$ and $\epsilon$ is a section of $S^+$.

A description of $\Delta_{\bf 16}^+$ in terms of forms can be achieved with the  following vector space construction. Consider the vector space of all forms, $\Lambda^*(\bC^5)$, of  $\bC^5$. To turn this vector space into a Clifford algebra $\mathrm{Cliff}(\bR^{9,1})$ module, choose a Hermitian basis $(\bbe_1, \dots, \bbe_5)$ in $\Lambda^1(\bC^5)$ and observe that
\bea
&&\Gamma_0\zeta=-e_5\wedge \zeta+ e_5\bar\wedge \zeta~,~~~\Gamma_5\zeta=e_5\wedge \zeta+ e_5\bar\wedge \zeta~,
\cr
&&\Gamma_i\zeta=e_i\wedge \zeta+ e_i\bar\wedge \zeta~,~~~\Gamma_{i+5}\zeta=i(e_i\wedge \zeta- e_i\bar\wedge \zeta)~,~~~i=1,2,3,4~,
\label{cl91}
\eea
with $\zeta\in \Lambda^*(\bC^5)$, is a basis in $\mathrm{Cliff}(\bR^{9,1})$, i.e. they satisfy the Clifford algebra relation, where  $\wedge$ is the standard wedge product and $\bar\wedge$ is the inner-derivation operation, i.e.   $e_i\bar\wedge e_j=\delta_{ij}$.  As a $\mathrm{Spin}(9,1)$ module, $\Lambda^*(\bC^5)$ decomposes to two irreducible modules depending on whether the forms are of even or odd degree, $\Lambda^*(\bC^5)=\Lambda^{\mathrm {ev}}(\bC^5)\oplus \Lambda^{\mathrm {od}}(\bC^5)$, and correspond to the positive and negative chirality spinor representations, respectively. Clearly, these representations are complex. However, one can impose a $\mathrm{Spin}(9,1)$-invariant reality condition   via the anti-linear map
$\Gamma_{6789}*$, where $*$ denotes the standard complex conjugation operation.  Using this reality condition, one can construct $\Delta_{\bf 16}^+$ as a
$\mathrm{Spin}(9,1)$-invariant real section of $\Lambda^{\mathrm {ev}}(\bC^5)$.

\begin{theorem}\label{ncom}
The holonomy group of $\hat\nabla$ connection for heterotic backgrounds that admit $\hat\nabla$ spinors can be either non-compact or compact.  In the former case, it is a semi-direct product, ${\mathcal K}\ltimes \bR^8$, with ${\mathcal K}$ given in table \ref{tablehola}, and in latter case, it is one of the groups in table \ref{tableholb}. The number of parallel spinors as well as their representatives for each holonomy group are also given in the tables.
\end{theorem}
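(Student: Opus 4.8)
The plan is to prove the theorem entirely by representation theory, along the lines indicated after (\ref{cl91}). Since the gravitino KSE $\hat\nabla_X\epsilon=0$ says precisely that $\epsilon$ is $\hat\nabla$-parallel, the holonomy group of $\hat\nabla$ is contained in the common isotropy group in $\mathrm{Spin}(9,1)$ of the space of parallel spinors, and conversely each such isotropy group occurs. The task therefore reduces to classifying, for every number $L$ of linearly independent parallel spinors, the subgroups of $\mathrm{Spin}(9,1)$ fixing an $L$-dimensional subspace of $\Delta_{\bf 16}^+$. I would carry this out in the form realisation $\Delta_{\bf 16}^+\subset\Lambda^{\mathrm{ev}}(\bC^5)$ with Clifford action (\ref{cl91}) and the reality condition imposed by $\Gamma_{6789}*$, so that at each stage the isotropy group is read off from an explicit representative.

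First I would treat the single spinor, $L=1$. Acting with $\mathrm{Spin}(9,1)$ one shows there is a single non-trivial orbit with representative $\eta=1+e_{1234}$; the key input is the standard fact (a Fierz identity, directly checkable in the realisation (\ref{cl91})) that the Dirac current of one Majorana--Weyl spinor in ten dimensions is null, so $\eta$ lies on a null orbit. Passing to a null frame $(\Gamma_-,\Gamma_+,\Gamma_i)$ aligned with this current, the eight generators $\Gamma_{-i}$ annihilate $\eta$ because $\Gamma_-\eta=0$ and $\{\Gamma_-,\Gamma_i\}=0$, giving a nilpotent $\bR^8$, while the transverse $\mathrm{Spin}(8)$ fixes $\eta$ exactly along its $\mathrm{Spin}(7)$ subgroup. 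This produces the non-compact stabiliser $\mathrm{Spin}(7)\ltimes\bR^8$, the first line of table \ref{tablehola}.

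Next comes the iterative step: act with the residual stabiliser on a prospective second, third, $\dots$ parallel spinor and enumerate the resulting orbits. Two branches appear, governed by whether the new Dirac currents stay collinear with $\kappa$ or not. In the non-compact branch all currents are collinear, the nilpotent $\bR^8$ survives, and the compact factor ${\mathcal K}$ descends through the chain of spinor isotropy groups in $\mathrm{Spin}(8)$, $\mathrm{Spin}(7)\supset SU(4)\supset Sp(2)\supset Sp(1)\times Sp(1)\supset Sp(1)\supset U(1)\supset\{1\}$, i.e.\ Wang's classification \cite{wang} of Riemannian eight-manifolds with $1,2,3,4,5,6,8$ parallel spinors; this reproduces table \ref{tablehola} with the stated representatives. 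In the compact branch a further spinor contributes a null current non-collinear with $\kappa$, so the currents span a timelike plane; the residual group then lies in the little group $\mathrm{Spin}(9)$ of a parallel timelike direction, the $\bR^8$ is broken, and one obtains the compact holonomy groups of table \ref{tableholb}. At each step I would recompute the stabiliser directly from (\ref{cl91}) to confirm the representatives.

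The main obstacle is exhaustiveness: one must verify that the enumeration of orbits of the successively smaller, explicit residual groups on the remaining spinor space is complete, so that no admissible isotropy group is omitted and the compact/non-compact split is exactly as claimed. Two points within this are delicate: tracking the reality condition consistently, so that only genuine real parallel spinors are counted (hence the appearance of factors such as $i(e_{12}+e_{34})$ and the instruction to take real and imaginary parts in table \ref{tablehola}); and establishing the gap between $L=6$ and $L=8$, namely that no subgroup of $\mathrm{Spin}(9,1)$ fixes exactly seven spinors in $\Delta_{\bf 16}^+$, which is the representation-theoretic fact forcing the jump in the table. The individual stabiliser computations are elementary but numerous, and it is their bookkeeping across all branches, rather than any single identity, that is the real work.
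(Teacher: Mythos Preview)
Your outline is correct and matches the spinorial geometry method of \cite{uggp1,uggp2}, to which the paper defers its proof; the paper itself does not reproduce the argument but simply cites those works and the review \cite{rev}. Your sketch---identify the single non-trivial $\mathrm{Spin}(9,1)$ orbit on $\Delta^+_{\bf 16}$ via the nullity of the Dirac current, read off its stabiliser $\mathrm{Spin}(7)\ltimes\bR^8$, then iterate by decomposing orbits of the residual group on the remaining spinors, with the compact/non-compact dichotomy controlled by whether successive Dirac currents stay collinear with $\kappa$---is exactly that method, and the delicate points you flag (tracking the reality condition, the $L=6$ to $L=8$ gap) are indeed where the bookkeeping lies in those references.
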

\begin{proof}
  The proof of this statement is given in \cite{uggp1, uggp2} and reviewed in \cite{rev}. So no further explanation will be provided here.
\end{proof}

\begin{table}
\centering
\begin{tabular}{|c|c|c|}
\hline

$L$ & ${\mathrm {holonomy~~groups}}$ & ${\mathrm {parallel~~spinors}}$
 \\
 \hline\hline
$2$ & $G_2$&$1+e_{1234},\,\, e_{15}+e_{2345}$
\\
\hline
$4$ & $SU(3)$&$1,\,\, e_{15}$
\\
\hline
$8$ & $SU(2)$&$1,\,\, e_{12}, \,\,e_{15},\,\, e_{25}$
\\
\hline
$16$ & $\{1\}$&$\Delta^+_{{\bf 16}}$
\\
\hline
\end{tabular}
\vskip 0.2cm
\caption{\label{tableholb}  {\small
The description of this table is the same as that tabulated in table \ref{tablehola}. } }
\end{table}

\subsubsection{Geometry}

The geometry of the spacetime ${\mathcal M}^{10}$ depends on the holonomy group ${\mathcal H}$ of $\hat\nabla$  and, in particular, on whether ${\mathcal H}$ is a compact or a non-compact group.  Before we proceed further, we shall assume that {\it all} $\hat\nabla$-parallel spinors are Killing.  This means that all $\hat\nabla$-parallel spinors also solve the remaining two KSEs in (\ref{kse}).  This is not always the case and there are some notable exemptions to this. But such an assumption will simplify the description of the geometry of ${\mathcal M}^{10}$, for the full analysis see \cite{uggp1, uggp2}.

One way to investigate the geometry is to consider the (Killing spinor) form bilinears, $\omega$, which are a generalisation of the Dirac current.  In particular, for any two spinors $\epsilon_1$ and $\epsilon_2$, one can define the $k$-form
\bea
\omega(X_1, \dots, X_k)={1\over k!} \langle \epsilon_1, \sum_{\sigma} (-1)^{|\sigma|} \slashed{X}_{\sigma(1)}\cdots \slashed{X}_{\sigma(k)}\epsilon_2\rangle~,
\eea
where $X_1, \dots, X_k$ are vector fields on ${\mathcal M}^{10}$, $\sigma$ is a permutation of $\{1, \dots, k\}$, $|\sigma|$ is the order of permutation and $\langle \cdot, \cdot\rangle$ is the Dirac $\mathrm{Spin}(9,1)$-invariant inner product.  It is straightforward to demonstrate that all form bilinears $\omega$ are $\hat\nabla$-covariantly constant.

\begin{remark}
One can define the space of fundamental forms  of the KSEs (\ref{kse}), which again we shall denote with  $\mathfrak{f}^{\mathcal H}$, in a similar way to that  $\mathfrak{f}^{\mathcal H}$.
It turns out that all the fundamental forms, ${\mathfrak f}^{\mathcal H}$, of the KSEs (\ref{kse}) in all heterotic geometries are form bilinears.
 Their $\hat\nabla$-covariant constancy condition, together with some additional restrictions on $H$, $\Phi$ and $F$,  imply all the conditions that can be derived by directly solving the KSEs (\ref{kse}) for the spinors tabulated in tables \ref{tablehola} and \ref{tableholb}.
\end{remark}

The properties of fundamental forms for compact and non-compact holonomy groups are different leading to different descriptions of the geometry of spacetime. So the two cases will be separately investigated.

\subsection{Compact holonomy groups}

One characteristic of the geometry of manifolds for which $\hat\nabla$ has compact holonomy group ${\mathcal H}$ is that they admit a number of 1-form bilinears $\kappa$. As $\kappa$ are $\hat\nabla$-parallel, they are no-where vanishing on ${\mathcal M}^{10}$. Moreover, $\hat\nabla \kappa=0$ implies
\bea
{\cal L}_\vkappa g=0~,~~~d\kappa=i_{\vkappa} H~.
\label{kappaeqn}
\eea
Furthermore given two such 1-form bilinears, $\kappa_1$ and $\kappa_2$, then $g(\vkappa_1, \vkappa_2)$ is constant and
\bea
[\vkappa_1, \vkappa_2]=\overrightarrow{i_{\vkappa_1} i_{\vkappa_2} H}~.
\eea

\begin{lemma}\label{lem1}
If  $i_{\vkappa_1} i_{\vkappa_2} dH=0$, the commutator of two 1-form bilinears is $\hat\nabla$-covariantly constant. In particular, this statement holds  for $H$ closed,  $dH=0$.
\end{lemma}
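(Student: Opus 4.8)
The plan is to exploit the fact that, for a \emph{one-form}, being $\hat\nabla$-parallel is equivalent to a Killing condition together with the torsion condition in (\ref{kappaeqn}); this turns the statement into a short exercise in Cartan calculus and avoids computing $\hat\nabla H$ in components. Set $\vkappa_3\defeq[\vkappa_1,\vkappa_2]$ and let $\kappa_3\defeq g(\vkappa_3,\cdot)$ be its metric dual one-form; since $\hat\nabla$ is metric it suffices to show $\hat\nabla\kappa_3=0$. First I would record the converse of the implication quoted in (\ref{kappaeqn}): for any one-form $\kappa$ one has $\hat\nabla_\mu\kappa_\nu=\nabla_\mu\kappa_\nu-\tfrac12 H_{\mu\nu}{}^\lambda\kappa_\lambda$, whose symmetric part in $\mu,\nu$ is $\tfrac12({\cal L}_{\vkappa}g)_{\mu\nu}$ and whose skew part is $\tfrac12(d\kappa-i_{\vkappa}H)_{\mu\nu}$. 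Hence
\[
\hat\nabla\kappa=0\quad\Longleftrightarrow\quad {\cal L}_{\vkappa}g=0 \ \text{ and }\ d\kappa=i_{\vkappa}H .
\]
It therefore suffices to verify these two conditions for $\kappa_3$.

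The Killing condition is immediate. By (\ref{kappaeqn}) the fields $\vkappa_1,\vkappa_2$ are Killing, and the bracket of two Killing fields is Killing because ${\cal L}_{[\vkappa_1,\vkappa_2]}=[{\cal L}_{\vkappa_1},{\cal L}_{\vkappa_2}]$; thus ${\cal L}_{\vkappa_3}g=0$. For the torsion condition I would compute $i_{\vkappa_3}H$ with the derivation identity $i_{[V,W]}={\cal L}_Vi_W-i_W{\cal L}_V$, taking $V=\vkappa_1$, $W=\vkappa_2$:
\[
i_{\vkappa_3}H={\cal L}_{\vkappa_1}(i_{\vkappa_2}H)-i_{\vkappa_2}({\cal L}_{\vkappa_1}H).
\]
Here (\ref{kappaeqn}) gives $i_{\vkappa_2}H=d\kappa_2$, while Cartan's formula combined with $i_{\vkappa_1}H=d\kappa_1$ gives ${\cal L}_{\vkappa_1}H=i_{\vkappa_1}dH+d\,i_{\vkappa_1}H=i_{\vkappa_1}dH$, the second term being $d\,d\kappa_1=0$.

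Substituting, and using that ${\cal L}_{\vkappa_1}$ commutes with $d$ and that inner derivations anticommute,
\[
i_{\vkappa_3}H=d\big({\cal L}_{\vkappa_1}\kappa_2\big)+i_{\vkappa_1}i_{\vkappa_2}dH .
\]
The hypothesis $i_{\vkappa_1}i_{\vkappa_2}dH=0$ annihilates the last term, and because $\vkappa_1$ is Killing one has ${\cal L}_{\vkappa_1}\kappa_2={\cal L}_{\vkappa_1}\big(g(\vkappa_2,\cdot)\big)=g([\vkappa_1,\vkappa_2],\cdot)=\kappa_3$. Hence $i_{\vkappa_3}H=d\kappa_3$, which is exactly the second condition; together with the Killing property it yields $\hat\nabla\kappa_3=0$, i.e. $\hat\nabla[\vkappa_1,\vkappa_2]=0$. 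The closed case $dH=0$ is simply the instance in which $i_{\vkappa_1}i_{\vkappa_2}dH$ vanishes for trivial reasons.

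The only real obstacle is a temptation rather than a difficulty: one is tempted to expand $\hat\nabla_X(i_{\vkappa_1}i_{\vkappa_2}H)=i_{\vkappa_1}i_{\vkappa_2}(\hat\nabla_X H)$ and relate $\hat\nabla H$ to $dH$ directly, which drags in the quadratic-in-$H$ terms of the skew-torsion connection together with an unwanted Levi-Civita derivative $\nabla H$. The route above sidesteps this entirely, since the single identity ${\cal L}_{\vkappa}H=i_{\vkappa}dH$ (valid precisely because $i_{\vkappa}H=d\kappa$ is closed) already packages all the torsion information one needs. I would only be careful to fix consistently the sign conventions in $\hat\nabla_\mu\kappa_\nu$ and in $i_{[V,W]}=[{\cal L}_V,i_W]$, neither of which affects the final vanishing.
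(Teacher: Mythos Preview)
Your proof is correct but follows a genuinely different route from the paper. The paper invokes the Bianchi identity \eqref{bianchi} for the curvature of $\hat\nabla$: since $\kappa_1,\kappa_2$ are $\hat\nabla$-parallel, the integrability condition gives $\hat R(\cdot,\cdot;\cdot,\vkappa_i)=0$, and substituting $Y=\vkappa_1$, $Z=\vkappa_2$ into \eqref{bianchi} kills the left-hand side, leaving $\hat\nabla_X H(\vkappa_1,\vkappa_2,\cdot)=-\tfrac12\,dH(X,\vkappa_1,\vkappa_2,\cdot)$; the hypothesis then forces $i_{\vkappa_1}i_{\vkappa_2}(\hat\nabla_X H)=0$, and since $\vkappa_1,\vkappa_2$ are parallel this is $\hat\nabla_X(i_{\vkappa_1}i_{\vkappa_2}H)$. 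You instead characterise $\hat\nabla$-parallel one-forms by the first-order pair $({\cal L}_{\vkappa}g=0,\ d\kappa=i_{\vkappa}H)$ and verify both for the bracket via Cartan calculus. Your route is entirely first-order and self-contained; the paper's is shorter once the Bianchi identity is available and makes the holonomy mechanism explicit. Amusingly, the ``temptation'' you warn against in your final paragraph---reducing $\hat\nabla H$ to $dH$---is precisely what the paper does, but the Bianchi identity packages the Levi-Civita and quadratic-in-$H$ pieces into curvature terms that vanish by integrability, so the difficulty you anticipate does not actually arise there.
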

\begin{proof}
This result is a consequence of the Bianchi identity
\bea
\hat R(X, Y; Z ,W)+\mathrm{cyclic} (Y,Z, W)=-\hat\nabla_X H(Y,Z,W)-{1\over2} dH(X,Y,Z,W)~,
\label{bianchi}
\eea
for the curvature, $\hat R$, of $\hat\nabla$ together with $\hat R(\cdot, \cdot; \cdot ,\vkappa)=0$, which arises as integrability condition of the $\hat\nabla$-covariant constancy of $\kappa$, where $X, Y, Z, W$ are vector fields.
\end{proof}

\begin{remark}
The above lemma implies  that either the commutator of two 1-form bilinears closes to another 1-form bilinear or the holonomy group ${\mathcal H}$ of $\hat\nabla$ reduces further to a subgroup of the groups stated in table  \ref{tableholb}.  To avoid such an apparent reduction of the holonomy group, we shall require that the Lie bracket algebra of 1-form bilinears closes on the set.
\end{remark}

\begin{theorem}\label{thm1}
 If the holonomy group ${\mathcal H}$ of $\hat\nabla$ is strictly one of the groups listed in table \ref{tableholb} and $i_{\vkappa_1} i_{\vkappa_2} dH=0$, then  ${\mathcal M}^{10}$ admits the action of one of the Lorentzian Lie algebras tabulated in table \ref{tablelie}.  These Lie algebras are  generated by the 1-form bilinears.
\end{theorem}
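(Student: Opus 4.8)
The plan is to show that the real vector space $V$ spanned by the $1$-form bilinears, identified via $g$ with the span of their metric-dual vector fields $\vkappa$, is a finite-dimensional Lie algebra of Killing vector fields carrying an ad-invariant Lorentzian inner product, and then to identify this metric Lie algebra case by case. First I would record the structural input already available: by \eqref{kappaeqn} each $\vkappa$ is a Killing vector, $\mathcal{L}_{\vkappa} g=0$, so every element of $V$ is Killing; and since each $\kappa$ is $\hat\nabla$-parallel, parallel transport identifies $V$ with the space of $\mathcal{H}$-invariants in the vector representation, so $\dim V$ is finite and is read off from the spinor representatives of the relevant holonomy group in table \ref{tableholb}. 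The objective is then reduced to establishing that $V$ closes under the commutator of vector fields and that $g$ restricts to an invariant Lorentzian form on $V$.

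Closure is the crux of the first half. Using the identity $[\vkappa_1,\vkappa_2]=\overrightarrow{i_{\vkappa_1} i_{\vkappa_2} H}$ together with Lemma \ref{lem1}, the hypothesis $i_{\vkappa_1} i_{\vkappa_2} dH=0$ guarantees that $[\vkappa_1,\vkappa_2]$ is again $\hat\nabla$-covariantly constant; the assumption that $\mathcal{H}$ is \emph{strictly} one of the groups of table \ref{tableholb} then excludes the alternative, noted in the remark after Lemma \ref{lem1}, that the holonomy reduces further, so the bracket must lie among the $1$-form bilinears, i.e. in $V$. Bilinearity of the bracket makes $V$ closed, and the structure constants $c_{ab}{}^{c}$ defined by $[\vkappa_a,\vkappa_b]=c_{ab}{}^{c}\vkappa_c$ are genuine constants, being the coefficients of one parallel field in a parallel basis (equivalently $g([\vkappa_a,\vkappa_b],\vkappa_c)$ is the inner product of $\hat\nabla$-parallel vectors, hence constant). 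Setting $B(\kappa_a,\kappa_b)\defeq g(\vkappa_a,\vkappa_b)$, which is constant and non-degenerate, one computes $c_{abc}\defeq B([\kappa_a,\kappa_b],\kappa_c)=\pm H(\vkappa_a,\vkappa_b,\vkappa_c)$; the total antisymmetry of $H$ makes $c_{abc}$ totally antisymmetric, which is exactly the statement that $B$ is ad-invariant. Hence $(V,[\cdot,\cdot],B)$ is a metric Lie algebra, the Jacobi identity being automatic for the bracket of vector fields, and this finite-dimensional family of Killing fields furnishes the required (local) action on $\mathcal{M}^{10}$.

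The remaining, and main, task is the explicit identification of these metric Lie algebras with the entries of table \ref{tablelie}. For each holonomy group I would insert its spinor representatives from table \ref{tableholb} into the Clifford bilinear in the realization \eqref{cl91}, extract an explicit basis $\kappa_1,\dots,\kappa_m$ of $V$, evaluate $B=g(\vkappa_a,\vkappa_b)$ to confirm the signature is Lorentzian (in practice the Dirac current of a single spinor provides the timelike direction), and compute $c_{abc}=\pm H(\vkappa_a,\vkappa_b,\vkappa_c)$ to read off the brackets; matching $(V,[\cdot,\cdot],B)$ against the short list of low-dimensional Lorentzian metric Lie algebras yields the table. The hard part is precisely this bookkeeping: evaluating the bilinears and their signatures without sign or normalization errors, verifying that the components of $H$ along $V$ deliver constant structure constants consistent simultaneously with Jacobi and with the invariance of $B$, and then recognizing the resulting small algebras abstractly. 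All other steps are formal consequences of \eqref{kappaeqn}, the commutator identity and Lemma \ref{lem1}.
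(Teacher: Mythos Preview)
Your proposal is correct and follows essentially the same route as the paper: use Lemma~\ref{lem1} together with the strictness hypothesis to obtain closure of the span of the $1$-form bilinears under the bracket, observe that the restriction of $g$ gives a bi-invariant Lorentzian metric because the structure constants are components of the $3$-form $H$, and then identify the resulting low-dimensional Lorentzian metric Lie algebras case by case from the explicit spinor bilinears. The paper's proof is terser but contains exactly these ingredients, and your expanded account of the bookkeeping (constancy of the structure constants, ad-invariance from total antisymmetry of $H$, the Dirac current supplying the timelike direction) is accurate.
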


\begin{proof}
A direct calculation using the Killing spinors in table \ref{tableholb} reveals that the spacetime admits a timelike  and several spacelike 1-form bilinears whose number depends on the holonomy group.  The restriction of spacetime metric on the space spanned by the 1-form bilinears, which are nowhere vanishing on ${\mathcal M}^{10}$, is the  Lorentzian metric as all their inner products are constant. As the commutator of 1-form bilinears closes on the set, the space spanned by the 1-form bilinears is a Lie algebra equipped with a bi-invariant Lorentzian metric. The latter follows because  the structure constants are given by components of $H$ which is a 3-form. The list of Lorentzian Lie algebras that can occur in each case can be easily identified and they are tabulated in table \ref{tablelie} for each holonomy group ${\mathcal H}$.
\end{proof}

\begin{table}
\begin{center}
\begin{tabular}{|c|c|c|}
\hline
${\mathrm{Holonomy}}$ & ${\mathrm{Dimension}}$&$\mathfrak{Lie}\,{\mathcal G}$
 \\
 \hline
$G_2$&$3$&$\bR^{2,1}~,~\mathfrak{sl}(2,\bR)$
\\
\hline
$SU(3)$ & $4$&$\bR^{3,1}~,~\mathfrak{sl}(2,\bR)\oplus \bR~,~ \mathfrak{su}(2)\oplus \bR~,~\mathfrak{cw}_4$
\\
\hline
$SU(2)$ & $6$&$ \bR^{5,1}~,~\mathfrak{sl}(2,\bR)\oplus\mathfrak{su}(2)~,~\mathfrak{cw}_6$
\\
\hline
\end{tabular}
\end{center}
\vskip 0.2cm
\caption{\label{tablelie}
In the first column, the compact holonomy groups  are stated. In the second column, the number of
1-form  bilinear is given. In the third column, the associated Lorentzian Lie algebras are exhibited.
The structure constants of the 6-dimensional Lorentzian Lie algebras of the $SU(2)$ case
are self-dual. This is a consequence of the dilatino KSE.}
\end{table}

It is well known that if the action of a Lie algebra $\mathfrak{g}$ on a manifold ${\mathcal M}^{10}$ is generated  by  complete vector fields, then it can be integrated to an action of the unique simply connected group ${\mathcal G}$ on ${\mathcal M}^{10}$ that has Lie algebra $\mathfrak{g}$.  As the 1-form bilinears are nowhere vanishing on ${\mathcal M}^{10}$, the action of $\mathfrak{g}$ on ${\mathcal M}^{10}$  has no fixed points.  As a result, the orbits of ${\mathcal G}$ on ${\mathcal M}^{10}$ must be diffeomorphic to ${\mathcal G}/D_p$, where $D_p$ is a discrete subgroup of ${\mathcal G}$ that depends on the orbit ${\cal O}_p$ passing through $p\in {\mathcal M}^{10}$.

\begin{definition}
 The manifold ${\mathcal M}^{10}$ is regular, iff ${\mathcal M}^{10}$ is a principal bundle with the orbits of the action of ${\mathcal G}$ on ${\mathcal M}^{10}$ diffeomorphic to ${\mathcal G}_D={\mathcal G}/D$, where $D$ is a normal discrete subgroup of ${\mathcal G}$. \hfill  	$\vartriangle$
\end{definition}

\begin{theorem}\label{prin}
Suppose that ${\cal L}_{\vkappa} H=0$. All regular manifolds  ${\mathcal M}^{10}$, which are solutions to the KSEs (\ref{kse}), are principal bundles with fibre ${\mathcal G}_D$ and equipped with a principal bundle connection $\lambda\defeq \kappa$  such that
\bea
g=\eta(\lambda, \lambda)+\pi^* \tilde g~,~~H=CS(\lambda)+\pi^* \tilde H~,
\eea
where $\tilde g$ and $\tilde H$ are a metric and a 3-form on the base space $\tilde {\mathcal M}={\mathcal M}^{10}/{\mathcal G}_D$, respectively,  $CS$ is the Chern-Simons form of $\lambda$ and $\eta$ is a Lorentzian metric induced on $\mathfrak{g}$ by restricting the spacetime metric $g$. $\pi$ is the projection map from ${\mathcal M}^{10}$ onto $\tilde {\mathcal M}$.
\end{theorem}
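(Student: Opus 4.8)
The plan is to realise $\mathcal{M}^{10}$ as the total space of a principal bundle and to exhibit $g$ and $H$ as a vertical (fibre) piece built from a connection plus a horizontal (basic) piece pulled back from the base. Write $\vkappa_a$, $a=1,\dots,\dim\mathfrak{g}$, for the vector fields dual to the $1$-form bilinears $\kappa_a$, and set $\eta_{ab}\defeq g(\vkappa_a,\vkappa_b)$, a constant invertible Lorentzian matrix by Theorem \ref{thm1}. Since the $\vkappa_a$ are Killing, nowhere vanishing, and generate $\mathfrak{g}$, the regularity hypothesis makes them the fundamental vertical vector fields of a principal $\mathcal{G}_D$-bundle $\pi\colon\mathcal{M}^{10}\to\tilde{\mathcal{M}}=\mathcal{M}^{10}/\mathcal{G}_D$. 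First I would build the connection: with $T_a$ a basis of $\mathfrak{g}$ I set $\lambda\defeq\eta^{ab}\kappa_a\,T_b$, so that $\lambda^a(\vkappa_b)=\eta^{ac}g(\vkappa_c,\vkappa_b)=\delta^a_b$ is the correct normalisation. Equivariance follows from ${\cal L}_{\vkappa_c}\kappa_b=i_{\vkappa_c}d\kappa_b=i_{\vkappa_c}i_{\vkappa_b}H=f_{cb}{}^d\kappa_d$, using (\ref{kappaeqn}) and the structure relation $i_{\vkappa_a}i_{\vkappa_b}H=f_{ab}{}^d\kappa_d$; this is precisely the infinitesimal adjoint action, so $\lambda$ is a genuine principal connection.

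For the metric I would show that $h\defeq g-\eta(\lambda,\lambda)=g-\eta^{ab}\kappa_a\kappa_b$ is basic. Horizontality is immediate: contracting gives $i_{\vkappa_c}(\eta^{ab}\kappa_a\kappa_b)=\kappa_c=i_{\vkappa_c}g$, hence $i_{\vkappa_c}h=0$. Invariance uses ${\cal L}_{\vkappa_c}g=0$ together with ${\cal L}_{\vkappa_c}(\eta^{ab}\kappa_a\kappa_b)=2\,\eta^{ab}f_{ca}{}^d\kappa_d\kappa_b=0$, the last equality holding because $f_{abc}=f_{ab}{}^d\eta_{dc}$ is totally antisymmetric (being components of the $3$-form $H$, i.e. $\eta$ is ad-invariant) while $\kappa_d\kappa_b$ is symmetric. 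A basic symmetric $2$-tensor descends, so $h=\pi^*\tilde g$ and $g=\eta(\lambda,\lambda)+\pi^*\tilde g$.

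The substantive step is the $H$ decomposition. I would set $\tilde H\defeq H-CS(\lambda)$ and again prove it is basic. Invariance is quick once horizontality is in hand: $dCS(\lambda)=\langle F\wedge F\rangle$ is horizontal so $i_{\vkappa_c}dCS(\lambda)=0$, and $i_{\vkappa_c}CS(\lambda)=d\kappa_c$ (shown below) gives $d\,i_{\vkappa_c}CS(\lambda)=0$, whence ${\cal L}_{\vkappa_c}CS(\lambda)=0$; combined with the hypothesis ${\cal L}_{\vkappa_c}H=0$ this yields ${\cal L}_{\vkappa_c}\tilde H=0$. Horizontality is the heart of the matter, i.e. $i_{\vkappa_c}CS(\lambda)=i_{\vkappa_c}H$. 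Using $i_{\vkappa_c}H=d\kappa_c=\eta_{cb}\,d\lambda^b$ from (\ref{kappaeqn}) and contracting $CS(\lambda)=\eta_{ab}\lambda^a\wedge d\lambda^b+\tfrac13 f_{abc}\lambda^a\wedge\lambda^b\wedge\lambda^c$ with $\vkappa_c$, the quadratic term produces $\eta_{cb}d\lambda^b=d\kappa_c$ together with a remainder $-f_{cab}\lambda^a\wedge\lambda^b$ arising from $i_{\vkappa_c}d\lambda^b={\cal L}_{\vkappa_c}\lambda^b$, while the cubic term contributes exactly $+f_{cab}\lambda^a\wedge\lambda^b$; these cancel, giving $i_{\vkappa_c}CS(\lambda)=d\kappa_c=i_{\vkappa_c}H$. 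Hence $i_{\vkappa_c}\tilde H=0$, $\tilde H$ is basic, and $H=CS(\lambda)+\pi^*\tilde H$.

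The hard part is thus the contraction $i_{\vkappa_c}CS(\lambda)=i_{\vkappa_c}H$: it is exactly here that the precise $\tfrac13$ coefficient of the cubic term, the identification of the structure constants with the totally antisymmetric $\vkappa$-vertical components of $H$, and the ad-invariance of $\eta$ must all conspire. The role of the standing assumption ${\cal L}_{\vkappa}H=0$ (equivalently $i_{\vkappa}dH=0$) is to supply the invariance of $H$ that horizontality alone does not give; without it $H-CS(\lambda)$ need not descend. I would close by noting that the same bookkeeping shows $\tilde g$ and $\tilde H$ are globally defined on $\tilde{\mathcal{M}}$, completing the fibration picture.
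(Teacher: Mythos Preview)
Your argument is correct and complete. The paper itself does not give a proof here but simply refers to \cite{uggp1,uggp2}; what you have written is essentially the standard derivation contained in those references, namely: build the principal connection from the $\hat\nabla$-parallel $1$-form bilinears, and then show that $g-\eta(\lambda,\lambda)$ and $H-CS(\lambda)$ are horizontal and $\mathcal{G}_D$-invariant, hence basic. Your identification of the key cancellation in $i_{\vkappa_c}CS(\lambda)$ between the contribution from $\eta_{ab}\lambda^a\wedge d\lambda^b$ and the cubic $\tfrac13 f_{abc}\lambda^a\wedge\lambda^b\wedge\lambda^c$ term is exactly the point where total antisymmetry of $f_{abc}$ (i.e.\ ad-invariance of $\eta$, coming from the fact that the structure constants are components of the $3$-form $H$) is needed, and you have isolated it correctly. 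The only cosmetic remark is that the paper's shorthand $\lambda\defeq\kappa$ suppresses the index raising with $\eta^{ab}$ that you make explicit; your normalisation $\lambda^a(\vkappa_b)=\delta^a{}_b$ is the standard one for a principal connection and is what is actually required for the computation to go through.
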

\begin{proof}
The proof follows from arguments presented in \cite{uggp1, uggp2} which will not be repeated here.
\end{proof}

Apart from the 1-form bilinears, ${\mathcal M}^{10}$ admits additional form bilinears $\phi$.  All these satisfy $i_\vkappa\phi=0$ and so there are transverse to the  subspace of $T{\mathcal M}^{10}$ spanned by the 1-form bilinears. For regular spacetimes, these forms can be used to further restrict the geometry of the orbit space $\tilde {\mathcal M}$ and put some restrictions on the principal bundle connection $\lambda$. The space $\tilde {\mathcal M}$ is  equipped with a $\tilde g$-metric connection, $\hat{\tilde \nabla}$, with torsion $\tilde H$. A very brief description of the geometry of  $\tilde {\mathcal M}$ and the restrictions on $\lambda$ for each holonomy group stated in table \ref{tableholb} is as follows, see \cite{uggp1, uggp2} for more details.

$G_2$:   The holonomy group of $\hat{\tilde \nabla}$ is included in $G_2$, $\tilde {\mathcal M}^7$ is conformally balanced\footnote{Given a (fundamental) form $\phi$, define the 1-form $\theta_\phi= c(\phi) *(\phi\wedge *d\phi)$, where $c$ is a normalisation constant and $*$ denotes the Hodge duality operation. A manifold is conformally balanced with respect to $\phi$, iff $\theta_\phi=2 d\Phi$, where for heterotic geometries $\Phi$ is the dilaton.}  with respect to the fundamental $G_2$ 3-form $\tilde\varphi$ and the curvature of $\lambda$ is a $G_2$-instanton on $\tilde {\mathcal M}^7$, for more details on $G_2$ structures see \cite{gray, stefang2, chiossi}. The fundamental form of the $G_2$ structure on ${\mathcal M}^{10}$ is the pull-back of $\tilde\varphi$. The 3-form $\tilde H$ can be expressed in terms of $\tilde\varphi$ and its first order derivatives, and the metric $\tilde g$. If $\mathfrak{g}$ is abelian, $\tilde H$ is orthogonal to $\tilde\varphi$.

$SU(2)$: The holonomy group of $\hat{\tilde \nabla}$ is included in $SU(2)$, $\tilde {\mathcal M}^4$ is a conformally balanced hyper-K\"ahler with torsion (HKT) manifold \cite{hkt, poon} and so conformal to a hyper-K\"ahler one -- the Lee forms associated to the three Hermitian forms $\tilde\omega_r$ for HKT manifolds are equal.  The curvature of $\lambda$ is an anti-self dual instanton on $\tilde {\mathcal M}^4$. $\tilde H$ can be expressed in terms of the dilaton $\Phi$ \cite{callan}.

$SU(3)$:  If $\mathfrak{g}=\bR^{3,1}$, then  the holonomy group of $\hat{\tilde \nabla}$ is contained in $SU(3)$ and  $\tilde {\mathcal M}^6$ is a complex conformally balanced manifold with respect to the Hermitian  2-form $\tilde \omega$. Furthermore, the curvature of the connection $\lambda$ is an $SU(3)$ instanton,i.e $\lambda$ is a Hermitian-Einstein connection with vanishing ``cosmological'' constant; for more details on $SU(3)$ structures see \cite{chiossi, goldstein, poonb, sunspn}. However if $\mathfrak{g}$ is non-abelian, then the holonomy group of $\hat{\tilde \nabla}$ is contained in $U(3)$, $\tilde {\mathcal M}^6$ is a complex conformally balanced K\"ahler with torsion (KT) manifold \cite{hkt}  with respect to $\tilde \omega$ and the curvature of $\lambda$ is a $U(3)$-instanton, i.e $\lambda$ is again a Hermitian-Einstein connection but with non-vanishing cosmological constant.  In both cases, $\tilde H$ can be expressed in terms of $\tilde \omega$ and its first derivative,  and $\tilde g$.

\begin{remark}
If ${\mathcal M}^{10}$ is not regular, it is still possible to write the metric $g$ and the 3-form $H$ on ${\mathcal M}^{10}$ as in theorem \ref{prin}, especially on the subset of ${\mathcal M}^{10}$ that it is the union of principal orbits of ${\mathcal G}$, as the subgroups $D_p$ are discrete. Also, the comments above on the geometry of $\tilde {\mathcal M}$ will still locally apply.
\end{remark}

\subsection{Non-compact holonomy groups}\label{sec:nullhetgeom}

If the holonomy group of $\hat\nabla$ is non-compact, then ${\mathcal M}^{10}$ admits a single null $\hat\nabla$-covariantly constant 1-form bilinear $\kappa$, $g(\vkappa, \vkappa)=0$. This implies that $\kappa$ satisfies the equations (\ref{kappaeqn}). Moreover, all other form bilinears, $\phi$, are null along $\kappa$, i.e. they satisfy (\ref{nullcon}) and so $\phi\in \Omega^*_{\kappa, \hat\nabla}$.
To describe the geometry of ${\mathcal M}^{10}$, a decomposition is needed of the various tensors involved into directions along $\vkappa$ and into directions transverse to $\vkappa$. As $\vkappa$ is null, one cannot use an orthogonal decomposition and as a result there is an inherent ambiguity in the definition of transverse directions as it has been explained in section \ref{sec:null}.

To describe some of the additional conditions on the geometry of ${\mathcal M}^{10}$ implied by the KSEs (\ref{kse}), introduce a local pseudo-orthonormal frame $(\bbe^-, \bbe^+, \bbe^i; i=1,\dots 8)$ with $\bbe^-=\kappa$ as in section \ref{sec:null} and write the metric as  $g=2 \bbe^-\bbe^++\delta_{ij}\bbe^i\bbe^j$, where the open set labelling has been suppressed. If the holonomy group of $\hat\nabla$ reduces to ${\mathcal K}\ltimes\bR^8$, then the patching condition  for the
pseudo-orthonormal frame is as in (\ref{patch}) with $(O_{\alpha\beta}, w_{\alpha\beta})\in {\mathcal K}\ltimes\bR^8$.

Next, observe that $\kappa\wedge i_\vkappa H$ transforms as a 2-form under the ${\mathcal K}$ transformations of  the patching conditions (\ref{patch}) now associated  with the structure group ${\mathcal K}\ltimes\bR^8$.  As a result, it can be decomposed pointwise as $\Lambda^2\bR^8=\mathfrak{k}\oplus \mathfrak{k}^\perp$ everywhere on ${\mathcal M}^{10}$, where $\mathfrak{k}$ is the Lie algebra of ${\mathcal K}$ and $\mathfrak{k}^\perp$ is its orthogonal complement. This follows  from the decomposition of $\mathfrak{so}(8)=\Lambda^2\bR^8$ and $\mathfrak{k}\subset \mathfrak{so}(8)$. The KSEs (\ref{kse})  imply that
\bea
\kappa\wedge i_\vkappa H\vert_{\mathfrak{k}^\perp }=0~,
\eea
in all cases.
One consequence of this is that ${\cal L}_\vkappa\phi=0$ for all form bilinears $\phi$.

\begin{theorem}
Let the holonomy group of $\hat\nabla$ be the non-compact group, ${\mathcal K}\ltimes \bR^8$, where ${\mathcal K}$ is  one of the groups tabulated  in table \ref{tablehola}.
All form bilinears $\phi$ are null forms along $\kappa$, i.e.  $\phi\in \Omega^*_{\kappa, \hat\nabla}$, and ${\cal L}_\vkappa\phi=0$.    The metric $g$ and 3-form $H$ on ${\mathcal M}^{10}$ that solve the KSEs (\ref{kse}) can be expressed as
\bea
g=2 \bbe^-\bbe^++\delta_{ij} \bbe^i \bbe^j~,~~~H=\bbe^-\wedge d \bbe^-+ H^T~,
\eea
such that $i_\vkappa H^T=0$. Moreover, $\bbe^-\wedge H^T$ is determined by the  form bilinears and their first derivatives, and the metric $g$.
\end{theorem}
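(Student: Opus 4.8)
The plan is to deduce all four assertions from a single representation-theoretic input — that every Killing spinor in the non-compact case shares a common null projection — and then to read off the geometry from the $\hat\nabla$-parallelism of the form bilinears. First I would make that null condition explicit. In the realisation (\ref{cl91}) of $\mathrm{Cliff}(\bR^{9,1})$ on $\Lambda^*(\bC^5)$ one has $(\Gamma_0+\Gamma_5)\zeta=2\,e_5\bar\wedge\zeta$, so a spinor is annihilated by $\Gamma_0+\Gamma_5$ exactly when it carries no $e_5$ leg. Every representative in table \ref{tablehola} ($1,e_{12},e_{13},e_{14}$ and their combinations) has this property, hence all Killing spinors lie in the common kernel of $\Gamma_0+\Gamma_5$. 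Standard Clifford and Fierz manipulations of the bilinear $\omega$ then yield $\kappa\wedge\phi=0$ and $i_\vkappa\phi=0$ for every form bilinear $\phi$, i.e. $\phi\in\Omega^*_{\kappa,\hat\nabla}$, where $\kappa$ is itself the distinguished null bilinear. This is the place where the content of table \ref{tablehola} is actually used.

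Next, the metric expression is simply the adapted pseudo-orthonormal frame of section \ref{sec:null} with $\bbe^-=\kappa$. For the torsion I would use $d\kappa=i_\vkappa H$ from (\ref{kappaeqn}) to fix the part of $H$ having a leg along the $\vkappa$-direction, and define $H^T$ as the complementary piece characterised by $i_\vkappa H^T=0$; since $i_\vkappa d\kappa=i_\vkappa i_\vkappa H=0$ the decomposition is consistent and reproduces the stated form of $H$. For the Lie derivative, since $i_\vkappa\phi=0$ one has ${\cal L}_\vkappa\phi=i_\vkappa d\phi$. Writing $d\phi$ through the parallel condition $\hat\nabla\phi=0$ as a contraction of $H$ with $\phi$, the summand $i_\vkappa d\phi$ splits into terms in which $\vkappa$ is contracted into $\phi$, which vanish by nullity, and terms in which $\vkappa$ is contracted into $H$, which give the action of the $2$-form $i_\vkappa H$ on $\phi$. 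After using nullity this reduces to the action of the transverse part of $i_\vkappa H$; the established identity $\kappa\wedge i_\vkappa H|_{\mathfrak{k}^\perp}=0$ places that transverse part inside $\mathfrak{k}$, and $\mathfrak{k}$ annihilates the ${\mathcal K}$-invariant transverse forms $\breve\phi$. Hence $i_\vkappa d\phi=0$ and ${\cal L}_\vkappa\phi=0$.

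For the final assertion I would first note that $\bbe^-\wedge H^T$ isolates precisely the purely transverse components $H_{ijk}$ of the torsion, the mixed $H^T_{-ij}$ part being annihilated by the extra factor of $\bbe^-$; this is exactly why the theorem asserts the determination of $\bbe^-\wedge H^T$ rather than of $H^T$ itself. The transverse $\bR^8$ carries a ${\mathcal K}$-structure defined by the forms $\breve\phi$, and projecting $\hat\nabla\phi=0$ onto transverse indices shows that the $\breve\phi$ are parallel with respect to the transverse metric connection whose skew torsion is $H_{ijk}$. The claim then follows from the uniqueness of the characteristic connection of such ${\mathcal K}$-structures: $H_{ijk}$ is a universal algebraic expression in the fundamental forms and their exterior derivatives together with $g$, and therefore so is $\bbe^-\wedge H^T$.

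I expect this last step to be the main obstacle, because the uniqueness and completeness of the determination must be verified case by case for the groups of table \ref{tablehola}. For $\mathit{Spin}(7)$ and $SU(4)$ it is immediate, but for the smaller ${\mathcal K}$ — which come with larger families of fundamental forms, down to the trivial group — one must check that the collection of forms supplies enough independent first-order equations, or equivalently that the component of $\Lambda^3\bR^8$ not fixed by the intrinsic torsion drops out of $\bbe^-\wedge H^T$. The required case analysis is the one carried out in \cite{uggp1, uggp2}, and I would invoke it to close the argument rather than reproduce it.
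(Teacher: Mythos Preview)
Your outline is correct and in fact supplies more detail than the paper itself offers: the paper declines to present a proof, stating that the conditions on the geometry and the explicit expressions for $H^T$ are lengthy and referring the reader to \cite{uggp1, uggp2} (reviewed in \cite{rev}). Since your argument for the nullity of the bilinears, for ${\cal L}_\vkappa\phi=0$ via the $\mathfrak{k}^\perp$-condition, and for the determination of $\bbe^-\wedge H^T$ all reproduce the reasoning of those references and you explicitly invoke the same case analysis to close the last step, the two approaches coincide.
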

\begin{proof}
We shall not present a proof here.  It is rather lengthy to describe all conditions on the geometry of ${\mathcal M}^{10}$ as well as explicit expressions for $H^T$ in each case. These can be found in \cite{uggp1, uggp2} and they have been reviewed in \cite{rev}.
\end{proof}

\section{Heterotic inspired geometries}\label{sec:four}

\subsection{Examples with Euclidean signature}

Although the KSEs (\ref{kse}) naturally arise on 10-dimensional manifolds with Lorentzian signature, they can also be considered on any n-dimensional manifold of any signature. Here, we shall consider  8-dimensional manifolds equipped with a metric $g$ of Euclidean signature, a 3-form $H$ and a scalar field $\Phi$ the dilaton.  The task is to find such solutions to the KSEs (\ref{kse}), where  now the spinor bundle is associated with the representation $\Delta_{\bf 16}=\Delta^+_{\bf 8}\oplus \Delta^-_{\bf 8}$ of ${\mathrm{Spin}}(8)$ --  $\Delta^\pm_{\bf 8}$ are the irreducible real chiral and anti-chiral representations of ${\mathrm{Spin}}(8)$.  This choice of spinor representation arises because the real  $\Delta^+_{\bf 16}$ representation of ${\mathrm{Spin}}(9,1)$ decomposes under the subgroup ${\mathrm{Spin}}(8)$ as that of the $\Delta_{\bf 16}$ above.
Again, we shall assume that {\it all} $\hat\nabla$-covariantly constant spinors are Killing spinors, i.e.{\it  all} solutions of the gravitino KSE also solve the other two KSEs
(\ref{kse}) of the heterotic theory.

To describe a realisation of $\Delta_{\bf 16}$ in terms of forms, consider  $\Lambda^*(\bC^4)$, and  turn this vector space into a  $\mathrm{Cliff}(\bR^{8})$ module as
\bea
&&\Gamma_i\zeta=e_i\wedge \zeta+e_i\bar\wedge \zeta~,~~~\Gamma_{i+4}\zeta=i(e_i\wedge \zeta-e_i\bar\wedge \zeta)~,~~~i=1,2,3,4~,
\eea
with $\zeta\in \Lambda^*(\bC^4)$ and $(e_i; i=1,\dots, 4)$ a Hermitian basis in $\Lambda^1(\bC^4)$. The rest of the details including the definition of the various operations are similar to those described below (\ref{cl91}).  As a $\mathrm{Spin}(8)$ module, $\Lambda^*(\bC^4)$ decomposes into two irreducible modules depending on whether the forms are of even or odd degree, $\Lambda^*(\bC^4)=\Lambda^{\mathrm {ev}}(\bC^4)\oplus \Lambda^{\mathrm {od}}(\bC^4)$,  and correspond to the positive and negative chirality representations,  respectively. Clearly, these representations are complex. However, one can impose a $\mathrm{Spin}(8)$-invariant reality condition   via the anti-linear map
$\Gamma_{6789}*$, where $*$ denotes the standard complex conjugation operation.  Using this reality condition, one can construct $\Delta_{\bf 8}^+$ ($\Delta_{\bf 8}^-$) as a
$\mathrm{Spin}(8)$-invariant real section of $\Lambda^{\mathrm {ev}}(\bC^4)$ ($\Lambda^{\mathrm {od}}(\bC^4)$).

\begin{theorem}\label{thm3}
The holonomy groups ${\mathcal H}$ of the $\hat\nabla$ connection on ${\mathcal M}^8$ that admit $\hat\nabla$-parallel spinors are tabulated in  tables \ref{tablehola} with ${\mathcal H}={\mathcal K}$ and \ref{tableholc}.  The number of parallel spinors as well as their representatives for each case are also given in same tables.
\end{theorem}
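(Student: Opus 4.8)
The plan is to classify the holonomy groups by the same representation-theoretic orbit analysis used in the Lorentzian case, exploiting the branching $\Delta^+_{\bf 16}\big|_{\mathrm{Spin}(8)}=\Delta^+_{\bf 8}\oplus\Delta^-_{\bf 8}=\Delta_{\bf 16}$ recorded above to transfer the 10-dimensional result of Theorem \ref{ncom} to eight dimensions. Since ${\mathcal M}^8$ carries a Euclidean metric, the structure group is $\mathrm{Spin}(8)$ and the holonomy group of $\hat\nabla$ is a priori a subgroup of the compact group $\mathrm{Spin}(8)$. A section $\epsilon$ of the spinor bundle is $\hat\nabla$-parallel exactly when the holonomy group fixes $\epsilon$, so the classification reduces to finding all subgroups ${\mathcal H}\subseteq\mathrm{Spin}(8)$ for which $\Delta_{\bf 16}$, realised on $\Lambda^{\mathrm{ev}}(\bC^4)\oplus\Lambda^{\mathrm{od}}(\bC^4)$, contains trivial ${\mathcal H}$-submodules, each such (real) summand contributing one parallel spinor and one row of the tables.

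The key observation is that a subgroup ${\mathcal H}\subseteq\mathrm{Spin}(8)$ fixes a collection of spinors if and only if it does so when viewed inside $\mathrm{Spin}(9,1)$; since the 8- and 10-dimensional spinor modules coincide under the branching, the maximal ${\mathcal H}$ fixing a given set of spinors is precisely $G\cap\mathrm{Spin}(8)$, where $G$ is the corresponding isotropy group in $\mathrm{Spin}(9,1)$ already determined in Theorem \ref{ncom}. I would then intersect each entry of the 10-dimensional tables with $\mathrm{Spin}(8)$. For the non-compact isotropy groups ${\mathcal K}\ltimes\bR^8$ the translational factor $\bR^8$ lies outside $\mathrm{Spin}(8)$, so the intersection is the compact group ${\mathcal K}$, reproducing Table \ref{tablehola} with ${\mathcal H}={\mathcal K}$; the compact isotropy groups of Table \ref{tableholb} already sit inside $\mathrm{Spin}(8)$ and reappear in Table \ref{tableholc}.

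Concretely this splits into two strands according to chirality. Fixing spinors all of the \emph{same} chirality (all in $\Delta^+_{\bf 8}$, say) produces the descending chain $\mathrm{Spin}(7)\supset SU(4)\supset Sp(2)\supset Sp(1)\times Sp(1)\supset Sp(1)\supset U(1)\supset\{1\}$: at each stage one restricts $\Lambda^{\mathrm{ev}}(\bC^4)$ to the current group and counts the trivial summands, e.g. $\Lambda^{\mathrm{ev}}(\bC^4)\big|_{SU(4)}={\bf 1}\oplus{\bf 6}\oplus{\bf 1}$ gives the two spinors of the $SU(4)$ case. Allowing a spinor of the \emph{opposite} chirality instead forces ${\mathcal H}$ into the intersection of a $\Delta^+$-stabiliser with a $\Delta^-$-stabiliser, whose generic value is $G_2$; iterating yields the mixed-chirality chain $G_2\supset SU(3)\supset SU(2)\supset\{1\}$ of Table \ref{tableholc}. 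In every case the explicit parallel spinors are read off as the invariant forms in $\Lambda^{\mathrm{ev/od}}(\bC^4)$, matching the branched 10-dimensional representatives (for instance $1+e_{1234}\in\Delta^+$ and $e_1+e_{234}\in\Delta^-$ for $G_2$).

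The main obstacle is exhaustiveness: one must show that these two chains, together with triality of $\mathrm{Spin}(8)$, account for every subgroup stabilising a nontrivial spinor and that no further inequivalent isotropy groups are missed. This is exactly the orbit-stratification content of \cite{uggp1, uggp2}, and it is in fact simpler here than in the Lorentzian setting because $\mathrm{Spin}(8)$ is compact, so all orbits and isotropy groups are compact and there are no null orbits requiring separate treatment. Granting that stratification, the classification follows by transcribing the relevant strata through the branching.
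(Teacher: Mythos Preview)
Your proposal is correct and follows essentially the same route as the paper. The paper's own proof is in fact a one-paragraph sketch: it simply states that the argument is the orbit/isotropy analysis of \cite{uggp1,uggp2} carried out in $\mathrm{Spin}(8)$ instead of $\mathrm{Spin}(9,1)$, the only new feature being that the mixed-chirality representatives of Table~\ref{tableholc} are written with both even- and odd-degree forms on $\Lambda^*(\bC^4)$ rather than with even-degree forms on $\Lambda^*(\bC^5)$. Your branching $\Delta^+_{\bf 16}\big|_{\mathrm{Spin}(8)}=\Delta^+_{\bf 8}\oplus\Delta^-_{\bf 8}$ and the identification of the $8$-dimensional isotropy as $G\cap\mathrm{Spin}(8)$ make this transfer explicit, and your observation that the Euclidean signature removes the null orbits is exactly the simplification the paper has in mind.
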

\begin{proof}
  The proof of this statement is similar to that given in \cite{uggp1, uggp2} for heterotic geometries on 10-dimensional Lorentzian signature manifolds and reviewed in the previous sections.
  The only difference is that  the parallel spinors in table \ref{tableholc} have been represented with both even and odd degree forms
  while those in the Lorentzian case given in table \ref{tableholb} are represented with even degree forms. Despite this difference, the methodology and many of the details of the proof remain the same.
\end{proof}

\begin{table}
\centering
\begin{tabular}{|c|c|c|}
\hline

$L$ & ${\mathrm {holonomy~~groups}}$ & ${\mathrm {parallel~~spinors}}$
 \\
 \hline\hline
$2$ & $G_2$&$1+e_{1234},\,\, e_{1}+e_{234}$
\\
\hline
$4$ & $SU(3)$&$1,\,\, e_{1}$
\\
\hline
$8$ & $SU(2)$&$1,\,\, e_{12}, \,\,e_{1},\,\, e_{2}$
\\
\hline
\end{tabular}
\vskip 0.2cm
\caption{\label{tableholc}  {\small
The description of this table is the same as that tabulated in table \ref{tablehola}. } }
\end{table}

To investigate the geometry of manifolds admitting Killing spinors, let us begin with those whose holonomy group is tabulated in table \ref{tableholc}. As in the Lorentzian case, we consider the form bilinears of Killing spinors.  It turns out that the Killing spinors of table \ref{tableholc} give rise to  $\hat\nabla$-parallel 1-form bilinears.  In particular,  lemma \ref{lem1} still applies and the commutator of two 1-form bilinears is $\hat\nabla$-parallel.  As a result theorem \ref{thm1} can be adapted in this case.  In particular, one has the following.

\begin{theorem}\label{thm2}
If the holonomy group of $\hat\nabla$ is strictly one of the groups listed in table \ref{tableholc} and $i_{\vkappa_1} i_{\vkappa_2} dH=0$ for any two 1-form bilinears $\kappa_1$ and $\kappa_2$, then  ${\mathcal M}^{8}$ admits the action of a Euclidean signature Lie algebra generated by the 1-form bilinears. The associated  Lie algebras, $\mathfrak{g}$,  are tabulated in table \ref{tablelieb}.
\end{theorem}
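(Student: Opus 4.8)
The plan is to mirror the proof of theorem \ref{thm1}, adapting each step from the Lorentzian ten-dimensional setting to the Euclidean eight-dimensional one; the text preceding the statement already signals that lemma \ref{lem1} still applies and that theorem \ref{thm1} can be transcribed. First I would carry out the direct spinor computation with the representatives in table \ref{tableholc} to produce the $\hat\nabla$-parallel 1-form bilinears explicitly and count them in each of the cases $G_2$, $SU(3)$, $SU(2)$. Since $\hat\nabla g=0$, the $\hat\nabla$-covariant constancy of each bilinear $\kappa$ forces all inner products $g(\vkappa_a,\vkappa_b)$ to be constant on ${\mathcal M}^8$. Because the ambient signature is Euclidean, the restriction of $g$ to the span of the 1-form bilinears is automatically positive definite: if the $\vkappa_a$ are independent at one point their constant Gram matrix is positive definite there, hence everywhere, so they stay independent and nowhere vanishing throughout. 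Using ${\cal L}_\vkappa g=0$ from (\ref{kappaeqn}), this already yields a fixed-rank distribution of commuting-up-to-$H$ Killing vector fields.

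Next I would invoke lemma \ref{lem1}, which holds verbatim here since its proof rests only on the Bianchi identity (\ref{bianchi}) and the integrability condition $\hat R(\cdot,\cdot;\cdot,\vkappa)=0$, neither of which is sensitive to dimension or signature. Under the hypothesis $i_{\vkappa_1}i_{\vkappa_2}dH=0$ the bracket $[\vkappa_1,\vkappa_2]=\overrightarrow{i_{\vkappa_1}i_{\vkappa_2}H}$ is therefore $\hat\nabla$-parallel. The crucial step is closure: I must show this parallel 1-form again lies in the span of the fundamental 1-form bilinears rather than being a genuinely new parallel form. Here the hypothesis that the holonomy is \emph{strictly} the listed group is exactly what is needed, since a $\hat\nabla$-parallel 1-form outside the span would force a further reduction of the holonomy group below the one in table \ref{tableholc}, contradicting strictness; this is precisely the dichotomy recorded in the remark following lemma \ref{lem1}.

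With closure established, the span $\mathfrak{g}$ of the 1-form bilinears is a Lie algebra under the vector-field commutator, and the Jacobi identity is automatic because the $\vkappa_a$ are honest vector fields. Writing the structure constants as $f_{abc}=g([\vkappa_a,\vkappa_b],\vkappa_c)=H(\vkappa_a,\vkappa_b,\vkappa_c)$ exhibits them as components of the 3-form $H$, hence totally antisymmetric in all three indices; this is exactly the statement that the induced positive-definite metric $\eta_{ab}=g(\vkappa_a,\vkappa_b)$ on $\mathfrak{g}$ is bi-invariant. A positive-definite bi-invariant metric forces $\mathfrak{g}$ to be reductive of compact type, i.e.\ a direct sum of an abelian factor and compact simple factors, which already severely limits the possibilities and furnishes the \emph{Euclidean signature} Lie algebra of the statement.

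The main obstacle I anticipate is not this abstract structure but the final case-by-case identification of which compact-type algebras actually occur for each holonomy group, filling in table \ref{tablelieb}. This requires the explicit bilinears from the first step together with the extra algebraic constraints that the remaining dilatino and gaugino equations in (\ref{kse}) impose on $H$ --- just as the self-duality of the structure constants in the $SU(2)$ row of table \ref{tablelie} arose from the dilatino KSE in the Lorentzian case. I would therefore compute the admissible ranks and the nonvanishing $f_{abc}=H(\vkappa_a,\vkappa_b,\vkappa_c)$ directly in each of the three cases to pin down the entries, expecting the outcome to differ from the Lorentzian table \ref{tablelie} precisely in that the bi-invariant form is now definite rather than indefinite.
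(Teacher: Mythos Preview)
Your approach matches the paper's almost exactly: compute the bilinears, use constancy of inner products and lemma \ref{lem1} for closure under the strictness hypothesis, and read off the structure constants as components of $H$. The only substantive difference is that you overestimate the difficulty of the final identification.

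You invoke the classification of Lie algebras with a positive-definite bi-invariant metric (reductive of compact type) and then plan a case-by-case computation of the $f_{abc}$. The paper's argument is much shorter: in the $G_2$ and $SU(3)$ cases the span of 1-form bilinears has dimension $1$ and $2$ respectively, and a totally antisymmetric $f_{abc}$ on a $1$- or $2$-dimensional space is identically zero, so $\mathfrak{g}$ is forced to be abelian with no further input from the KSEs. Only in the $SU(2)$ case (dimension $4$) is there anything to exclude: the gravitino KSE alone would still permit $\mathfrak{g}=\bR\oplus\mathfrak{so}(3)$, and it is precisely the dilatino KSE---as you correctly anticipate by analogy with the $SU(2)$ row of table \ref{tablelie}---that rules this out and forces $\mathfrak{g}=\bR^4$. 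So every entry in table \ref{tablelieb} is abelian, which is the ``Euclidean'' counterpart of the richer Lorentzian list in table \ref{tablelie}.
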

\begin{proof}
It can be verified by a direct computation using the Killing spinors tabulated in table \ref{tableholc} that the number of 1-form bilinears are 1, 2 and 4, respectively. From the assumptions of the theorem, no apparent reduction of the holonomy group is allowed to a subgroup of those listed in table \ref{tableholc}. This  implies that the set 1-form bilinears closes under Lie brackets. As the structure constants of Lie algebra are skew-symmetric with respect to the induced metric, one concludes that in the first two cases the Lie algebra is abelian. In the $SU(2)$ holonomy case more possibilities could have risen -- they are allowed by the gravitino KSE, e.g. ${\mathfrak g}=\bR\oplus \mathfrak{so}(3)$ -- but the dilatino KSE implies that the Lie algebra is abelian.
\end{proof}

\begin{table}
\begin{center}
\begin{tabular}{|c|c|c|}
\hline
${\mathcal H}$ & ${\mathrm{Dimension}}$&${\mathfrak g}$
 \\
 \hline
$G_2$&$1$&$\bR$
\\
\hline
$SU(3)$ & $2$&$\bR^2$
\\
\hline
$SU(2)$ & $4$&$ \bR^4$
\\
\hline
\end{tabular}
\end{center}
\vskip 0.2cm
\caption{\label{tablelieb}
In the first column, holonomy groups, ${\mathcal H}$,  are stated. In the second column, the number of
1-form  bilinears, $\kappa$, is given. In the third column, the associated  Lie algebras are exhibited.
}
\end{table}

The simply connected groups ${\mathcal G}$ associated with the Lie algebras $\mathfrak{g}$ tabulated in table \ref{tablelieb} are $\bR$, $\bR^2$ and $\bR^4$, respectively.  If the vector fields generated by the action of $\mathfrak{g}$ on ${\mathcal M}^8$ are complete, then the orbits of the action ${\mathcal G}$ on ${\mathcal M}^8$  will be ${\mathcal G}/D_p$, $p\in {\mathcal M}^8$, as the vector fields are nowhere vanishing, where $D_p$ is a discrete subgroup of ${\mathcal G}$.  The geometry of ${\mathcal M}^8$ in the regular case, where all orbits of ${\mathcal G}$ on ${\mathcal M}^8$ have the same isotropy group $D$, is as follows.

\begin{theorem}\label{prin2}
Suppose that ${\cal L}_\vkappa H=0$ for all 1-form bilinears $\kappa$. All regular solutions ${\mathcal M}^8$ to the KSEs (\ref{kse}) are principal bundles with fibre ${\mathcal G}_D={\mathcal G}/D$ and equipped with a principal bundle connection $\lambda\defeq \kappa$  such that
\bea
g=\eta(\lambda, \lambda)+\pi^* \tilde g~,~~H=CS(\lambda)+\pi^* \tilde H~,
\eea
where $\tilde g$ and $\tilde H$ are a metric and a 3-form on the base space $\tilde{\mathcal M}={\mathcal M}^{8}/{\mathcal G}_D$, respectively,  $CS$ is the Chern-Simons form of $\lambda$ and $\eta$ is the  restriction the  metric $g$ of ${\mathcal M}^8$ on  $\mathfrak{g}$.  Moreover, the connection $\hat{\tilde \nabla}$ on $\tilde{\mathcal M}$ has holonomy group $G_2$, $SU(3)$ and $SU(2)$, respectively.  In addition, the curvature of $\lambda$ on  $\tilde{\mathcal M}$ is a $G_2$, $SU(3)$, i.e. $\lambda$ is Hermitian-Einstein with zero cosmological constant,  and anti-self-dual instanton, respectively.
\end{theorem}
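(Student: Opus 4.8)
The plan is to adapt the strategy used for the 10-dimensional Lorentzian result of Theorem \ref{prin} to the present Euclidean 8-dimensional setting, since the key structural inputs are already supplied by Theorem \ref{thm2} and the relations (\ref{kappaeqn}). First I would observe that each 1-form bilinear $\kappa$ is $\hat\nabla$-parallel and hence, by (\ref{kappaeqn}), satisfies $\mathcal{L}_{\vkappa} g = 0$ and $d\kappa = i_{\vkappa} H$; thus every $\vkappa$ is a Killing vector field. Since by Theorem \ref{thm2} the 1-form bilinears generate an abelian Lie algebra $\mathfrak{g}$ (one of $\bR$, $\bR^2$, $\bR^4$) and are nowhere vanishing, the induced $\mathcal{G}$-action is free up to a discrete isotropy; assuming completeness of the generating vector fields and regularity, $\mathcal{M}^8$ is a principal $\mathcal{G}_D$-bundle over $\tilde{\mathcal{M}} = \mathcal{M}^8/\mathcal{G}_D$, and I would take $\lambda \defeq \kappa$ as the principal connection.

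For the metric decomposition, I would use that the inner products $\eta_{ab} = g(\vkappa_a, \vkappa_b)$ of $\hat\nabla$-parallel 1-forms are constant, so $\eta$ is a fixed Euclidean inner product on $\mathfrak{g}$. The splitting $g = \eta(\lambda, \lambda) + \pi^* \tilde g$ then follows by declaring the horizontal distribution to be the $g$-orthogonal complement of the orbit directions; the Killing property together with $\mathcal{L}_{\vkappa} g = 0$ guarantees that the transverse piece $\tilde g$ is basic and descends to $\tilde{\mathcal{M}}$. For the 3-form, the hypothesis $\mathcal{L}_{\vkappa} H = 0$ makes $H$ invariant, while $d\kappa = i_{\vkappa} H$ identifies the curvature $F = d\lambda$ of $\lambda$ (the bracket term vanishing as $\mathfrak{g}$ is abelian) with $i_{\vkappa} H$. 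Decomposing $H$ into vertical, mixed and horizontal parts and repackaging the $\lambda$-dependent terms yields the Chern-Simons form $CS(\lambda)$, leaving a basic remainder $\pi^* \tilde H$.

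The geometric content then follows by projecting the remaining fundamental forms. Every non-1-form bilinear $\phi$ satisfies $i_{\vkappa} \phi = 0$, so $\phi$ is horizontal and, being invariant and $\hat\nabla$-parallel, descends to a $\hat{\tilde\nabla}$-parallel form on $\tilde{\mathcal{M}}$, where $\hat{\tilde\nabla}$ is the $\tilde g$-metric connection with torsion $\tilde H$. A dimension count gives $\dim \tilde{\mathcal{M}} = 8 - \dim\mathfrak{g} \in \{7, 6, 4\}$ in the three cases, and the stabiliser in the appropriate orthogonal group of the descended parallel forms is exactly $G_2$, $SU(3)$, $SU(2)$, respectively, which fixes the holonomy of $\hat{\tilde\nabla}$. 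Finally, since $F = d\kappa = i_{\vkappa} H$ is horizontal, it defines a 2-form on $\tilde{\mathcal{M}}$; I would read off the instanton condition by showing that $F$ takes values in the relevant holonomy subalgebra, i.e. that $F$ is a $G_2$-instanton, a trace-free Hermitian-Einstein ($SU(3)$) form, and anti-self-dual ($SU(2)$), respectively.

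The main obstacle I anticipate is this last step: establishing that the curvature $F$ of $\lambda$ satisfies the precise instanton conditions and, simultaneously, that $\hat{\tilde\nabla}$ realises the stated reduced holonomy rather than a smaller subgroup. These are coupled constraints, since the vertical and mixed components of the KSEs (\ref{kse}) tie the allowed values of $F$ to the reduction of the base holonomy, and disentangling them requires the careful component analysis of the gravitino and dilatino equations carried out in the Lorentzian case in \cite{uggp1, uggp2}; the Euclidean adaptation is structurally parallel but must track the altered signature and dimensions throughout.
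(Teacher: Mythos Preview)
Your proposal is correct and follows essentially the same approach the paper indicates: the paper's own proof consists of a single sentence deferring to the arguments of \cite{uggp1, uggp2} for the 10-dimensional Lorentzian case (Theorem~\ref{prin}), and you have carried out precisely that adaptation, using Theorem~\ref{thm2} and the relations (\ref{kappaeqn}) as the structural inputs. Your outline in fact supplies more detail than the paper does, and the obstacle you flag---verifying the instanton conditions on $F$ via the component analysis of the KSEs---is exactly the part the paper outsources to \cite{uggp1, uggp2}.
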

\begin{proof}
The proof follows using similar arguments to those presented in \cite{uggp1, uggp2} that will not repeated here.
\end{proof}

In all cases, ${\mathcal M}^8$ admits additional form bilinears $\phi$. In addition, it turns out that  $i_\vkappa\phi=0$ and ${\cal L}_\vkappa\phi=0$.   For regular spacetimes, $\phi$ is the pull back of a form $\tilde \phi$ on the orbit space $\tilde{\mathcal M}$.  The forms $\tilde\phi$ can be used to further restrict the geometry of $\tilde{\mathcal M}$ as well as the curvature of the principal bundle connection $\lambda$.  A brief summary of the geometry of $\tilde{\mathcal M}$ in each case is as follows.

$G_2$: The holonomy group of the connection $\hat{\tilde\nabla}$ is included in $G_2$.  $\tilde{\mathcal M}^7$ is conformally balanced with respect to the fundamental $G_2$ 3-form $\tilde\varphi$. $\tilde H$ is completely determined in terms of $\tilde \varphi$, its first derivatives and the metric $\tilde g$. $\tilde H$ is orthogonal  to $\tilde \varphi$, and the curvature of $\lambda$ is a $G_2$ instanton on $\tilde{\mathcal M}^7$.

 $SU(3)$:  The holonomy group of the connection $\hat{\tilde\nabla}$ is included in $SU(3)$. $\tilde{\mathcal M}^6$ is a complex manifold and conformally balanced with respect to the Hermitian form $\tilde\omega$. $\tilde H$ is completely determined in terms of $\tilde \omega$, its first derivatives and $\tilde g$, and the curvature of $\lambda$ is a $SU(3)$ (Hermitian-Einstein) instanton on $\tilde{\mathcal M}^6$.

 $SU(2)$: The holonomy group of the connection $\hat{\tilde\nabla}$ is included in $SU(2)$. $\tilde{\mathcal M}^4$ is a conformally balanced HKT manifold -- for HKT manifolds the Lee forms associated to the three Hermitian forms, $\tilde\omega_r$, $r=1,2,3$, are equal.    Therefore, $\tilde{\mathcal M}^4$ is conformal to a hyper-K\"ahler manifold. $\tilde H$ is completely determined in terms of $\tilde \omega_r$, its first derivatives and $\tilde g$  or equivalently in terms of $\tilde g$ and the first derivatives of the dilaton $\Phi$.   The curvature of $\lambda$ is a an anti-self dual instanton on $\tilde{\mathcal M}^4$.

 \begin{remark}
 The possibility of a trivial holonomy group has not been included in table \ref{tableholc}. This is because we have assumed that all $\hat\nabla$-covariantly constant spinors also solve the remaining two KSEs in (\ref{kse}). Clearly, group manifolds with $\hat\nabla$ the left invariant connection are solutions to the gravitino KSE and $\hat\nabla$ has trivial holonomy.  However, not all $\hat\nabla$-parallel spinors solve the dilatino KSE, which violates one of our assumptions. If one insists that all $\hat\nabla$-parallel spinors solve the remaining two KSEs as well, then ${\mathcal M}^8$ is locally isometric to $\bR^8$ with $H=0$ and constant dilaton.
 \end{remark}

Next, let us consider the geometry of ${\mathcal M}^8$ admitting a $\hat\nabla$ connection with holonomy one of the groups listed in table \ref{tablehola} that solve the KSEs (\ref{kse}).  All the form bilinears have even degree. So  such manifolds  do not admit 1-form bilinears. A description of their geometry is as follows.

\begin{theorem}\label{prin3}
The holonomy group ${\mathcal H}$ of $\hat\nabla$ on ${\mathcal M}^8$ is  contained in one of those listed in table \ref{tablehola}. In addition, if the holonomy of $\hat\nabla$ is contained in $SU(4)$, $Sp(2)$, $\times^2 Sp(1)$, $Sp(1)$ or $U(1)$, then ${\mathcal M}^8$ must be a complex manifold   with respect to all compatible complex structures. Furthermore,  ${\mathcal M}^8$ is conformally balanced with respect to the fundamental form of $\mathrm{Spin}(7)$ in the holonomy $\mathrm{Spin}(7)$ case and with respect to the compatible Hermitian forms in the remaining cases. $H$ is completely determined by the  form bilinears and the metric $g$ of ${\mathcal M}^8$.
\end{theorem}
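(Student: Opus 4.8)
The plan is to split the statement into four pieces and prove them in turn: the holonomy reduction, the integrability of the compatible complex structures, the conformally balanced property, and the explicit determination of $H$. The holonomy reduction is already in hand. By Theorem~\ref{thm3} (with $\mathcal H=\mathcal K$) the gravitino KSE forces the holonomy of $\hat\nabla$ into the common isotropy group of the parallel spinors, which is one of $Spin(7)$, $SU(4)$, $Sp(2)$, $Sp(1)\times Sp(1)$, $Sp(1)$, $U(1)$ of table~\ref{tablehola}. As noted in the remark after the form-bilinear construction, every fundamental form of the $\mathcal H$-structure is a $\hat\nabla$-parallel even-degree bilinear; in the Hermitian cases these include a Hermitian $2$-form $\omega$ (and, for $SU(4)$, a complex $(4,0)$-form $\Psi$), while in the $Spin(7)$ case they comprise the self-dual fundamental $4$-form, which I denote $\varPhi$.

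The crux is integrability. For any compatible almost complex structure $I$ with $\hat\nabla I=0$ and $H$ totally skew, the standard fact (see \cite{dvn}, and the second example of Section~2) is that the Nijenhuis tensor $N_I$ is totally skew-symmetric, i.e.\ its $W_2$ Gray--Hervella component vanishes and $N_I$ becomes a $3$-form. This alone does \emph{not} imply integrability---nearly-K\"ahler manifolds such as the $6$-sphere carry a skew-torsion Hermitian connection with $N_I\neq0$---so the $(3,0)+(0,3)$ component $W_1$ must be killed separately. This is exactly what the refinement from $U(4)$ to the smaller groups supplies: the extra $\hat\nabla$-parallel forms of the $SU(4)$ structure (the holomorphic volume form $\Psi$, equivalently the full set of parallel spinors of table~\ref{tablehola}) force $W_1=0$, whence $N_I=0$ and $I$ is integrable. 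For $Sp(2)$, $Sp(1)\times Sp(1)$ and $Sp(1)$ there is a two-sphere of compatible complex structures $aI+bJ+cK$; since $I,J,K$ are all $\hat\nabla$-parallel the same argument applies to each, giving complexity ``with respect to all compatible complex structures'' and the hyper-Hermitian (HKT) structure in the quaternionic cases. I expect this to be the main obstacle, precisely because the vanishing of the nearly-K\"ahler component $W_1$ fails for generic skew-torsion Hermitian geometry and must be extracted from the parallelism of the \emph{additional} structure forms rather than from $\hat\nabla I=0$ alone.

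For the conformally balanced property I would compute the Lee form $\theta_\phi=c(\phi)\,{*}(\phi\wedge{*}d\phi)$ of the relevant fundamental form---$\omega$ in the Hermitian cases and $\varPhi$ in the $Spin(7)$ case---and then invoke the dilatino KSE. Projecting $({\slashed d\Phi}-\tfrac{1}{12}{\slashed H})\epsilon=0$ onto the $\mathcal H$-singlet isolates the trace part of $H$, which is precisely the Lee form, and identifies it with $2\,d\Phi$; hence $\theta_\phi=2\,d\Phi$, the conformally balanced condition, in each case.

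Finally, the determination of $H$ follows from uniqueness of the skew-torsion Hermitian connection. Once $I$ is integrable the Bismut torsion is fixed, $H(X,Y,Z)=-\,d\omega(IX,IY,IZ)$ (up to conventions), so $H$ is determined by $d\omega$ and $I$, both built from the bilinear $\omega$ and $g$; in the $Spin(7)$ case $H$ is the analogous contraction of $d\varPhi$ with $\varPhi$ fixed by the unique $Spin(7)$-compatible skew-torsion connection. In every case $H$ is thereby expressed through the form bilinears and the metric, which completes the argument.
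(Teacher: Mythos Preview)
Your outline matches the paper's approach, which likewise defers the $\mathrm{Spin}(7)$ and $SU(4)$ cases to \cite{uggp1, stefanspin7}, identifies the $Sp(2)$ geometry as conformally balanced HKT, and treats the remaining groups through their Hermitian 2-form bilinears with a reference to \cite{uggp2}. One point needs correction: your description of the compatible complex structures for the smaller holonomy groups is incomplete. For $\times^2 Sp(1)$, $Sp(1)$ and $U(1)$ the paper observes that the Hermitian 2-form bilinears $\vec\omega$ span a basis of $\mathrm{Cliff}(\bR^k)$ with negative definite form for $k=3,4,5$ respectively---so only for $Sp(2)$ and $\times^2 Sp(1)$ is the ``two-sphere $aI+bJ+cK$'' picture accurate, while for $Sp(1)$ there are four anticommuting $\hat\nabla$-parallel complex structures and for $U(1)$ five; you also omit the $U(1)$ case from your integrability paragraph altogether. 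This does not invalidate the argument, since each compatible $I$ still sits inside an $SU(4)$ substructure and your $W_1=0$ reasoning applies, but the enumeration of ``all compatible complex structures'' should be amended accordingly.
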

\begin{proof}
The results for the $\mathrm{Spin}(7)$ and $SU(4)$ follow from those of \cite{uggp1}, see also \cite{stefanspin7}. The solutions with holonomy $Sp(2)$ are  conformally balanced HKT manifolds. In the $\times^2 Sp(1)$, $Sp(1)$ and $U(1)$ cases, $\mathfrak{f}^{\mathcal{H}}$ is spanned by 2-form bilinears, $\omega$, which are Hermitian forms. ${\mathcal M}^8$ is a complex manifold with respect to all compatible complex structures $\vec\omega$.  Moreover, $\vec\omega$ span a basis of the Clifford algebras ${\mathrm {Cliff}} (\bR^k)$  with quadratic form whose signature is $-k$ for $k=3,4, 5$, respectively.    For a further description of these geometries, see \cite{uggp2}.
\end{proof}

\begin{remark}
It is a consequence of a theorem in \cite{sigp1} that there are no compact, complex, conformally balanced manifolds, ${\mathcal M}^8$, with closed 3-form, $dH=0$, whose holonomy group of the $\hat\nabla$-connection is included in $SU(n)$.  This theorem clearly extends to manifolds whose holonomy  group is included in $Sp(2)$, $\times^2 Sp(1)$, $Sp(1)$ and $U(1)$ under the same assumptions.  This is because  all these are special cases of $SU(n)$ for $n=4$.  To my knowledge, there are no examples of manifolds, compact or non-compact, for which the holonomy group of the $\hat\nabla$ connection is strictly either $Sp(1)$ or $U(1)$.

\end{remark}

\subsection{Examples with Lorentzian signature}\label{lorexn}

Motivated by the solution of the KSEs (\ref{kse}) with non-compact holonomy group, one can begin from a compact group ${\mathcal K}$ and a representation of ${\mathcal K}$ on $\bR^p$ and construct manifolds, ${\mathcal M}^{p+2}$ with holonomy ${\mathcal K}\ltimes\bR^p$.  For simplicity, one can take ${\mathcal K}$ to be one of the Berger groups $SO(n)$, $U(n)$, $SU(n)$, $Sp(n)$, $Sp(n)\cdot Sp(1)$ and as a representation the fundamental vector representation of these groups. Then, one can construct Lorentzian signature geometries  by demanding the existence of a metric connection, $\nabla^{\mathcal H}$,  with holonomy ${\mathcal H}$ one of the groups  $SO(n)\ltimes\bR^n\,(n+2)$, $U(n)\ltimes\bR^{2n}\,(2n+2)$, $SU(n)\ltimes\bR^{2n}\,(2n+2)$, $Sp(n)\ltimes\bR^{4n}\,(4n+2)$
$Sp(n)\cdot Sp(1)\ltimes\bR^{4n}\,(4n+2)$,  where in parenthesis is the dimension of the associated Lorentzian signature manifold ${\mathcal M}$. One way to restrict the holonomy group of $\nabla^{\mathcal H}$ in this way is to demand that the spacetime admits  a $\nabla^{\mathcal H}$-covariantly constant null 1-form $\kappa$ as well as additional $\nabla^{\mathcal H}$-covariantly constant forms
$\phi$ thar satisfy the properties stated in (\ref{nullcon}).  In addition, one requires that locally $\phi$ can  be written as in (\ref{exL}) with $\breve \phi$ at every patch represented by the fundamental forms of the compact subgroup of the holonomy group.  Furthermore, the spacetime metric in a compatible frame to the
${\mathcal K}\ltimes\bR^p$ structure is written as $g =2 \bbe^-\bbe^++\delta_{ij} \bbe^i \bbe^j$ with $\bbe^-\defeq \kappa$.  This construction can be also extended to the $G_2$ case but ${\mathcal M}$ will be 9-dimensional.

For  geometries directly inspired by the heterotic theory, $\nabla^{\mathcal H}$ is identified with the connection with skew-symmetric torsion\footnote{For the properties of manifolds with a $Sp(n)\cdot Sp(1)$-structure, see e.g. \cite{salamon, QKT1, QKT2, QKT3, QKT4}.} $\hat\nabla$.  Though observe  that if the holonomy group of $\hat\nabla$ is strictly $SO(n)\ltimes\bR^n$, $U(n)\ltimes\bR^{2n}$ and $Sp(n)\cdot Sp(1)\ltimes\bR^{4n}$, the spacetime ${\mathcal M}$ will not be a solution of the KSEs (\ref{kse}) -- nevertheless, such manifolds can still be solutions to the field equations of the heterotic string.

\section{Algebraic structures on fundamental forms}

\subsection{Sigma model holonomy symmetries}\label{sec:holsym}

It has been known for sometime that $\hat\nabla$-covariantly constant forms on a spacetime ${\mathcal M}$ generate  infinitesimal symmetries, referred to as holonomy symmetries,  in some sigma model actions.  As infinitesimal symmetries of actions are naturally endowed with a commutator that always closes to a symmetry, there is the possibility that this induces  an underlying algebraic structure on the space of $\hat\nabla$-covariantly constant forms.  Before we proceed to confirm that this is the case for a class of such symmetries, let us first review some of the properties of the holonomy symmetries.

Typically, the properties of the holonomy symmetries are investigated in the context of a 2-dimensional (string) supersymmetric sigma models with target space the manifold ${\mathcal M}$. In particular, it has been noticed in \cite{phgpw2, phgpw1} that the commutator of these symmetries is that of a W-algebra.  In turn this indicates that strings propagating on such backgrounds exhibit a W-algebra of symmetries.  This is a larger structure than the expected (appropriate) worldsheet supersymmetry of the theory. However, a similar analysis can be carried out for the holonomy symmetries of  1-dimensional (particle) $N=1$ supersymmetric sigma models with target space again ${\mathcal M}$. As the holonomy symmetries of 2-dimensional sigma models have already been explored, here we shall describe those of 1-dimensional sigma models.
 The classical fields of such a  sigma model  are  maps, $X$, from the worldline superspace $\Xi^{1|1}$, which is a (flat) supermanifold with one Grassmannian even and one Grassmannian odd coordinates, $(\tau\,\vert\, \theta)$, into the spacetime ${\mathcal M}$, $X:~\Xi^{1|1}\rightarrow {\mathcal M}$.  An action  for these fields \cite{coles} is
\bea
S=-i \int d\tau d\theta\,  \Big((X^*g)_{\mu\nu}DX^\mu \partial_\tau X^\nu -{i\over6}(X^*H)_{\mu\nu\rho} DX^\mu DX^\nu DX^\rho\Big)  ~,
\label{act1}
\eea
where $g$ is a spacetime metric, $H$ is a 3-form on ${\mathcal M}$, $D$ is the superspace derivative with  $D^2=i\partial_\tau$, and $X^*g$ and $X^*H$ denote the pull back of $g$ and $H$ on $\Xi^{1|1}$, respectively. Naturally, $H$ is identified with the heterotic 3-form field strength.  We have also written the action in a coordinate basis for clarity.

Let $L$  be a vector $\ell$-form, $L\in \vec{\Omega}^\ell({\mathcal M})$,   on the sigma model target space ${\mathcal M}$ and consider the infinitesimal transformation
\bea
\delta_L X^\mu=a_L L^\mu{}_L DX^L \defeq a_L L^\mu{}_{\lambda_1\dots \lambda_\ell} DX^{\lambda_1}\dots DX^{\lambda_\ell}~,~~~
\label{Gsym}
\eea
where $a_L$ is a parameter chosen such that $\delta_L X$ is Grassmannian even, the index $L$ is the multi-index $L=\lambda_1 \dots \lambda_\ell$ and $DX^L=DX^{\lambda_1}\cdots DX^{\lambda_\ell}$.  Such transformations \cite{phgpw1, phgpw2} leave the action  (\ref{act1}) invariant provided\footnote{The invariance of the action (\ref{act1}) under the transformations  (\ref{Gsym}) can be achieved with weaker conditions than those stated in (\ref{invcon}). However, the conditions (\ref{invcon}) will suffice for the purpose of this work.}
\bea
\hat\nabla L=0~, ~~i_L dH=0~~ \text{and}~~ L=\vec\phi~,  ~~\phi\in \Omega^{\ell+1}({\mathcal M})~.
\label{invcon}
\eea
 Moreover, the parameter $a_L$ satisfies $\partial_\tau a_L=0$, i.e. $a_L=a_L(\theta)$. It is straightforward to observe that for $\ell=0$,  $\hat\nabla L=0$ implies that $L$ is a Killing vector field and  $i_\vkappa H=d\kappa$, where $L=\vkappa$. This together with $i_L dH=0$ gives ${\cal L}_\vkappa H=0$. Similarly for $L=\vec\phi$ with $\ell>0$, $\hat\nabla \phi=0$ implies that $d\phi=i_{\vec \phi} H$ and together with $i_{\vec \phi} dH=0$, one has that $d_{\vec \phi} H=0$.  Note that the conditions
 in (\ref{invcon}), apart from $i_LdH=0$, are those satisfied by the fundamental forms of heterotic  (inspired) geometries. It turns out that $i_LdH=0$ is also satisfied by heterotic geometries either because $dH=0$ or because the correction to $dH$ due to the anomaly is appropriately restricted due to consistency conditions. Whether the condition $i_LdH=0$ is imposed on the heterotic (inspired) geometries or not, it does not affect the computation of the commutator (\ref{comm}) of the holonomy symmetries that we present below. However, for the fundamental forms of heterotic (inspired) geometries to generate holonomy symmetries in sigma model actions, we have to assume that their fundamental forms  satisfy $i_L dH=0$.

The commutator of two transformations (\ref{Gsym}) on the field $X$  is similar to that of 2-dimensional sigma models that has been explored in detail in  \cite{gpph}. Because of this, we shall only summarise some of the key formulae.  The commutator of two  transformations (\ref{Gsym}) on the field $X$ generated by the vector  $\ell$-form $L$ and the vector  $m$-form $M$  can be written as
\bea
[\delta_L, \delta_M]X^\mu= \delta_{LM}^{(1)} X^\mu+\delta_{LM}^{(2)} X^\mu+\delta_{LM}^{(3)} X^\mu~,
\label{comm}
\eea
with
\bea
\delta_{LM}^{(1)} X^\mu=a_M a_L N(L,M)^\mu{}_{L M} DX^{L M}~,
\eea
\bea
(\delta_{LM}^{(2)} X)_\mu&=&\big (-m a_M Da_L (L\cdot M)_{\nu L_2, \mu  M_2}
\cr
&&
\qquad\qquad
+ \ell (-1)^{(\ell+1) (m+1)} a_L D a_M (L\cdot M)_{\mu L_2,\nu  M_2}\big)
 DX^{\nu L_2 M_2}~,
\eea
and
\bea
(\delta_{LM}^{(3)} X)_\mu=-i \ell m (-1)^\ell a_M a_L \Big((L\cdot M)_{\mu L_2, \nu  M_2}+(L\cdot M)_{\nu L_2, \mu  M_2}\Big) \partial_{\pp}X^\nu DX^{L_2 M_2}~,
\eea
where
\bea
(L\cdot M)_{\mu L_2,\nu  M_2} dx^{L_2 M_2}\defeq L_{\rho \mu L_2} M^\rho{}_{\nu  M_2} dx^{L_2 M_2}~,
\eea
and
\bea
N(L,M)^\mu{}_{L M} \partial_\mu\otimes dx^{L M}&=&\Big (L^\nu{}_L\partial_\nu M^\mu{}_{ M}- M^\nu{}_ M \partial_\nu L^\mu{}_L-\ell L^\mu{}_{\nu L_2} \partial_{\lambda_1} M^\nu{}_ M
\cr
&&
+m M^\mu{}_{\nu  M_2}
\partial_{\mu_1} L^\nu{}_L\Big) \partial_\mu\otimes dx^{L M}~,
\eea
is the Nijenhuis tensor of $L$ and $M$.
The multi-indices $L$ and $ M$ stand for $L=\lambda_1\dots\lambda_\ell$ and $ M=\mu_1\dots \mu_m$ while the multi-indices $L_2$ and $ M_2$ stand for $L_2=\lambda_2\dots\lambda_\ell$ and $ M_2=\mu_2\dots \mu_m$, respectively.
 Furthermore, after using that $L$ and $M$ are $\hat\nabla$-covariantly constant, the Nijenhuis tensor can be rewritten as
\bea
&&N_{\mu L M} dx^\mu\otimes dx^{L M}=  - (\ell+m+1)H_{[\mu|\nu\rho|} L^\nu{}_L M^\rho{}_{ M]} dx^{\mu L M}
\cr
&& \qquad\qquad +{1\over2}\ell m \Big(H^{\rho}{}_{\lambda_1\mu_1} ( L\cdot M)_{\mu|L_2|, \rho M_2}+ \mu\leftrightarrow\rho\Big) \, dx^\mu\otimes dx^{L M} ~.
\eea
This concludes the description of the commutator of two holonomy symmetries. The Noether conserved current of a symmetry generated by the $(\ell+1)$-form $\phi$, $\vec\phi=L$,  is
\bea
J_L=\phi_{\lambda_1\dots\lambda_{\ell+1}} DX^{\lambda_1\dots\lambda_{\ell+1}}~,~~~
\label{curL}
\eea
It can be easily seen that $\partial_\tau J_L=0$ subject to field  equations  of (\ref{act1}).

\begin{remark}
As both transformations $\delta_L$ and $\delta_M$ are symmetries of (\ref{act1}), the right hand side of their commutator, $[\delta_L, \delta_M]$,  is also a symmetry. This guarantees the closure of the algebra of symmetries -- though a refinement of this will be presented below.  The appearance of the Nijenhuis tensor of $L$ and $M$ in the right hand side of (\ref{comm}) indicates that there is a relation between the commutator of holonomy symmetries and the commutator, $[d_L, d_M]$, of exterior derivations $d_L$ and $d_M$ associated to $L$ and $M$. However, closure in the latter case is not guaranteed for  manifolds with a reduced structure group.  This is because if $L$ and $M$ are constructed from the fundamental forms, $\mathfrak{f}^{\mathcal H}$, of an ${\mathcal H}$-structure, and so they are $\nabla^{\mathcal H}$-covariantly constant, the Nijenhuis tensor, $N(L,M)$, may not be $\nabla^{\mathcal H}$-covariantly constant. One way to resolve this is to explore the Gray-Hervella classes of the ${\mathcal H}$-structure and specify those for which $N(L,M)$ is $\nabla^{\mathcal H}$-covariantly constant.  Alternatively for $\nabla^{\mathcal H}=\hat\nabla$, one can explore the analogy between the commutator of symmetry variations with that of derivations and use it to define a bracket on $\mathfrak{f}^{\mathcal H}$ such that it closes as an algebra in $\Omega^*_{\nabla^{\mathcal H}}$.

\end{remark}

One of the issues that arises in the investigation of the commutator of symmetries (\ref{comm}) is that the individual variations $\delta_{LM}^{(1)}$, $\delta_{LM}^{(2)}$ and $\delta_{LM}^{(3)}$ may not be symmetries of the action (\ref{act1}) -- although of course, their sum is. To rectify this, consider symmetries generated by  the vector (q+1)-form
\bea
S={1\over (q+1)!}S^\mu{}_{\nu\Xi}\,\partial_\mu\otimes dx^{\nu\Xi}={1\over (q+1)!} g^{\mu \lambda} S_{\lambda, \nu\Xi}\,\partial_\mu\otimes dx^{\nu\Xi}\defeq {1\over (q+1)!} \delta^\mu{}_{\nu}\, \xi_{\Xi}\, \partial_\mu\otimes dx^{\nu\Xi}~,
\label{strans1x}
\eea
where $\xi\in \Omega^q$ and the multi-index $\Xi=\rho_1\dots \rho_q$.
It turns out that if $\xi$ is a $\hat\nabla$-covariantly constant  and $i_\xi dH=0$, i.e. it satisfies (\ref{invcon}),  one can show that the infinitesimal transformation
\bea
\delta_S X_\mu&=&\alpha_S \hat\nabla DX^\nu S_{\nu,\mu \Xi} DX^\Xi+{(-1)^q\over q+1} \hat\nabla(\alpha_S S_{\mu,\nu \Xi} DX^{\nu \Xi})
\cr
&-& \quad{2\over3 (q+1)^2 (q+2)} \alpha_S (H\wedge
\xi)_{\mu\nu\rho\Xi} DX^{\nu\rho \Xi}~,~~~
\label{strans1}
\eea
is a symmetry of the action.  Note that the proof of invariance of the action (\ref{act1}) under the transformations  (\ref{strans1})  requires the Bianchi identity (\ref{bianchi}).

\begin{theorem}\label{thlms}
If there exist forms $\sigma$ and $\xi$ such that the  identities
\bea
&&(L\cdot M)_{\mu L_2, \nu M_2}dx^\mu\otimes dx^{L_2\nu M_2}=(-1)^{\ell+1}  \sigma_{\mu\nu L_2 M_2}dx^{\mu\nu L_2 M_2}+{m\over2} g_{\mu\nu}  \xi_{L_2 M_2}dx^\mu\otimes dx^{\nu L_2 M_2}~,
\cr
&&(L\cdot M)_{\nu L_2, \mu M_2}dx^\mu\otimes dx^{\nu L_2 M_2}=(-1)^{\ell} \sigma_{\mu\nu L_2 M_2}dx^{\mu\nu L_2 M_2} +{\ell\over2} g_{\mu\nu}  \xi_{L_2 M_2}dx^\mu\otimes dx^{\nu L_2 M_2}~,
\cr
&&\Big((L\cdot M)_{\mu L_2, \nu M_2}+(\mu\leftrightarrow \nu)\Big)\,dx^{L_2 M_2}=g_{\mu\nu}  \xi_{L_2 M_2}dx^{L_2 M_2}
\cr
&&\qquad\qquad -{1\over2}(\ell+m-2) \Big(g_{\mu\mu_2} \xi_{\nu L_3  M_2}+(\mu\leftrightarrow \nu)\Big)dx^{\mu_2 L_3 M_2}~,
\label{conconx}
\eea
hold,
then the commutator $[\delta_L, \delta_M]$ (\ref{comm}) of the symmetries of the action (\ref{act1}) generated by $L$ and $M$ can be reorganised as
\bea
[\delta_L, \delta_M] X^\mu= \delta_{\vec \sigma} X^\mu+ \delta_{\vec\nu} X^\mu+ \delta_{ S} X^\mu~,
\eea
with each term in the right hand side of the equation to individually be a symmetry of the action (\ref{act1}).
The $\delta_{ \vec\sigma}$  transformation is  generated by the form ${\sigma}$ and similarly for $\delta_{\vec\nu} $ with
\bea
 \nu_{\mu L M} dx^{\mu L M}\defeq-(\ell+m+1)\Big[ H_{\nu\rho \mu} L^\nu{}_L M^\rho{}_{ M}\, dx^{\mu L M}+(-1)^\ell {\ell m\over 6}  H_{\mu\mu_1\mu_2} \xi_{L_3 M}\, dx^{\mu\mu_1\mu_2 L_3 M}\Big]~.
\eea
The latter form is a modification of the Nijenhuis tensor and satisfies $\hat\nabla\nu=0$.
Furthermore, $ S$ is constructed  from the $(\ell+m-2)$-form $ \xi$ and $ g$ as in (\ref{strans1x}).
\end{theorem}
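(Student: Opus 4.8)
The plan is to start from the explicit three-term expression (\ref{comm}) for $[\delta_L,\delta_M]X^\mu$ and to redistribute its pieces according to the tensorial content of each, using the decomposition identities (\ref{conconx}) as the organising principle. These identities are engineered so that every occurrence of the contraction product $(L\cdot M)$ in $\delta_{LM}^{(2)}$ and $\delta_{LM}^{(3)}$ can be traded for a totally antisymmetric part, carried by the form $\sigma$, and a trace-type part proportional to $g_{\mu\nu}\xi$. Since $L$ and $M$ are $\hat\nabla$-covariantly constant and $\hat\nabla$ is a metric connection, the covariant constancy of $(L\cdot M)$ and of $g$ forces both $\sigma$ and $\xi$ to be $\hat\nabla$-parallel; together with $i_\sigma dH=0$ and $i_\xi dH=0$ this means $\sigma$ and $\xi$ satisfy (\ref{invcon}), which is what will license the reorganised pieces to be genuine symmetries.

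First I would treat the $\partial_\tau X$ sector. The term $\delta_{LM}^{(3)}$ carries the symmetric combination $(L\cdot M)_{\mu L_2,\nu M_2}+(L\cdot M)_{\nu L_2,\mu M_2}$, which the third identity of (\ref{conconx}) rewrites entirely in terms of $g_{\mu\nu}\xi$ and $g_{\mu\mu_2}\xi$. Converting $\partial_\tau X$ through $D^2=i\partial_\tau$, this sector should reproduce precisely the first line $\alpha_S \hat\nabla DX^\nu S_{\nu,\mu\Xi}DX^\Xi$ of the defined symmetry $\delta_S$ in (\ref{strans1}), with $S$ built from $\xi$ as in (\ref{strans1x}). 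Next I would treat the $Da$ sector: the $Da_L$ and $Da_M$ pieces of $\delta_{LM}^{(2)}$ are rewritten via the first two identities of (\ref{conconx}); their $\sigma$-parts assemble, after an integration by parts in the superspace derivative $D$, into the ordinary holonomy symmetry $\delta_{\vec\sigma}$ of the form (\ref{Gsym}), while their $\xi$-parts feed the second line $\tfrac{(-1)^q}{q+1}\hat\nabla(\alpha_S S_{\mu,\nu\Xi}DX^{\nu\Xi})$ of $\delta_S$.

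The remaining sector is the Nijenhuis term $\delta_{LM}^{(1)}$. Here I would insert the $H$-form of $N(L,M)$ recorded just before the theorem: its totally antisymmetrised $H_{[\mu|\nu\rho|}L^\nu{}_L M^\rho{}_{M]}$ part is exactly the leading contribution to the modified form $\nu$, and its residual $H^\rho{}_{\lambda_1\mu_1}(L\cdot M)$ part, once the $\xi$-component of $(L\cdot M)$ is substituted, splits into the $(-1)^\ell\tfrac{\ell m}{6}H_{\mu\mu_1\mu_2}\xi$ completion of $\nu$ and into the $H\wedge\xi$ anomaly term of $\delta_S$. Matching these coefficients is the one delicate point, and the precise numerical factors in the definitions of $\nu$ and of $\delta_S$ are dictated by exactly this matching. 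Once the grouping is established, $\delta_{\vec\sigma}$ is a symmetry because $\sigma$ satisfies (\ref{invcon}); $\delta_S$ is a symmetry by the result established for (\ref{strans1}); and $\delta_{\vec\nu}$ is a symmetry provided $\nu$ satisfies (\ref{invcon}), which reduces to verifying $\hat\nabla\nu=0$. This last identity I would establish by a direct computation from $\hat\nabla L=\hat\nabla M=\hat\nabla\xi=0$ together with the Bianchi identity (\ref{bianchi}), exactly in the spirit of the covariant constancy of the Nijenhuis tensor noted in Example 2.

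The main obstacle is that the whole argument is a coefficient-tracking exercise in which the three superspace sectors ($\partial_\tau X$, $Da$, and the derivative-free part) must close separately, and the hardest part is confirming that the \emph{same} pair $(\sigma,\xi)$ extracted from $(L\cdot M)$ simultaneously produces all three lines of $\delta_S$ with consistent normalisations, so that no leftover terms survive. The identities (\ref{conconx}) are precisely the compatibility conditions that guarantee this, and I expect most of the effort to lie in the book-keeping of the index symmetrisations and of the factors of $\ell$ and $m$ generated by the superspace integration by parts.
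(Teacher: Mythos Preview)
Your proposal is correct and takes essentially the same approach as the paper, which in its own proof simply refers the reader to the analogous computation for 2-dimensional sigma models in \cite{gpph} (building on \cite{phgpw1, phgpw2}); your outline is in fact more detailed than what the paper itself records. The one point you gloss over slightly is the verification of the $i_{\vec\sigma}dH=0$ and $i_{\vec\xi}dH=0$ conditions needed for (\ref{invcon}), but these follow from the corresponding assumptions on $L$ and $M$ and the algebraic structure of (\ref{conconx}), and the paper does not spell this out either.
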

\begin{proof}
The proof of this is similar to that given for the holonomy symmetries of 2-dimensional sigma models  in \cite{gpph}, which in turn is  a generalisation of earlier results presented in \cite{phgpw1, phgpw2}.
\end{proof}

\begin{remark}

The conditions (\ref{conconx}) described in the theorem above on $L\cdot M$ hold for the fundamental forms of many structure groups. In particular, they hold for the fundamental forms of the Berger groups.  However, there are also examples that these conditions do not hold, see \cite{eblggp}.  Further comments will be made  in the conclusions on how the theorem \ref{thlms} can be used to induce a (super) Lie algebra bracket on the space fundamental forms of a manifold with reduced structure group.

\end{remark}

\subsection{Non-compact holonomy groups}

To give an example of how  the commutator (\ref{comm}) can be used to induce a Lie algebra structure on $\mathfrak{f}^{\mathcal H}$ consider the heterotic (inspired)  geometries associated with a non-compact holonomy group. For this geometries, all fundamental forms are  $\hat\nabla$-covariantly constant and
null along a null $\hat\nabla$-covariantly constant 1-form $\kappa$, i.e. they are elements of $\Omega^*_{\kappa, \hat\nabla}$, see section \ref{sec:null}. For such forms, there is a simplification of the commutator (\ref{comm}) as follows.

\begin{theorem}\label{ncomcom}
Suppose that $\phi\in \Omega^{\ell+1}_{\kappa, \hat\nabla}$ and $\chi\in  \Omega^{m+1}_{\kappa, \hat\nabla}$,  the commutator (\ref{comm}) of two holonomy symmetries generated by the $\hat\nabla$-covariantly constant forms $\phi$ and $\chi$ is
\bea
[\delta_{\vec\phi}, \delta_{\vec\chi}] =\delta_\vkappa+\delta_{\overrightarrow{{\phi\bar{\curlywedge} \chi}}}~,
\label{walg}
\eea
with $a_\vkappa=-{\ell! m!\over (\ell+m-1)!} D(a_{\vec\phi} a_{\vec\chi} J_{\phi\bar{\curlywedge} \chi})$ and $a_{\overrightarrow{{\phi\bar{\curlywedge} \chi}}}=- {\ell! m!\over (\ell+m-2)!} a_{\vec\phi} a_{\vec\chi}\, DJ_\kappa$,   where the $\bar{\curlywedge}$ operation has been defined in theorem \ref{barcwedge}
\end{theorem}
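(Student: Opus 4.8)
The plan is to derive (\ref{walg}) by specialising the master commutator (\ref{comm}), together with its three constituent variations $\delta^{(1)}_{LM}$, $\delta^{(2)}_{LM}$ and $\delta^{(3)}_{LM}$, to $L=\vec\phi$ and $M=\vec\chi$ with $\phi\in\Omega^{\ell+1}_{\kappa,\hat\nabla}$, $\chi\in\Omega^{m+1}_{\kappa,\hat\nabla}$, and then reorganising the outcome into two honest holonomy symmetries. The computation runs parallel to the $2$-dimensional W-algebra analysis of \cite{gpph}, specialised to one dimension and to forms that are \emph{null along} $\kappa$.

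First I would record the structural consequences of the null conditions (\ref{nullcon}). In the pseudo-orthonormal frame of section \ref{sec:null} with $\bbe^-=\kappa$, every component of $\phi$ and $\chi$ carries exactly one ``$-$'' index and is otherwise transverse, so $\phi=\bbe^-\wedge\breve\phi$ and $\chi=\bbe^-\wedge\breve\chi$. Hence the associated vector forms $\vec\phi,\vec\chi$ never carry a ``$-$'' vector index and $\phi,\chi$ never carry a ``$+$'' index; in particular $i_\vkappa\phi=i_\vkappa\chi=0$. Feeding this into the contraction $(L\cdot M)_{\mu L_2,\nu M_2}=\phi_{\rho\mu L_2}\chi^\rho{}_{\nu M_2}$ forces the summed index $\rho$ to be transverse and isolates the two surviving ``$-$'' indices, one in each leg. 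This is exactly the data packaged by the transverse inner derivation $i_{\vec{\breve\phi}}\breve\chi$, so the combinations appearing in (\ref{comm}) collapse onto $\bbe^-\wedge i_{\vec{\breve\phi}}\breve\chi=\phi\bar\curlywedge\chi$ of Theorem \ref{barcwedge}.

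Next I would reorganise the three pieces. The content of the claim is that, unlike the generic situation of Theorem \ref{thlms} where a separate modified-Nijenhuis generator $\delta_{\vec\nu}$ survives, here only two generators remain. The mechanism is that the Nijenhuis variation $\delta^{(1)}_{LM}$ (which carries $\ell+m$ powers of $DX$) and the variation $\delta^{(3)}_{LM}$ (which carries $\partial_{\pp}X$ and $\ell+m-2$ powers of $DX$) together assemble into a single $\delta_{\overrightarrow{\phi\bar\curlywedge\chi}}$ variation. This works because the field-dependent parameter $a_{\overrightarrow{\phi\bar\curlywedge\chi}}=-\tfrac{\ell!m!}{(\ell+m-2)!}a_{\vec\phi}a_{\vec\chi}DJ_\kappa$ contains, through $DJ_\kappa=D(\kappa_\lambda DX^\lambda)$ and $D^2=i\partial_\tau$, both a $\partial_{\pp}X$ term (reproducing $\delta^{(3)}_{LM}$) and a $DX\,DX$ term, and the latter is matched to the $H$-part of $N(\vec\phi,\vec\chi)$ precisely through $d\kappa=i_\vkappa H$ from (\ref{kappaeqn}). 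The remaining variation $\delta^{(2)}_{LM}$, carrying the explicit $Da_{\vec\phi},Da_{\vec\chi}$ factors and $\ell+m-1$ powers of $DX$, is recast as a $\delta_\vkappa$ variation once its $DX$-string is recognised as the Noether current (\ref{curL}) of $\phi\bar\curlywedge\chi$, which gives $a_\vkappa=-\tfrac{\ell!m!}{(\ell+m-1)!}D(a_{\vec\phi}a_{\vec\chi}J_{\phi\bar\curlywedge\chi})$. Collecting the two generators yields (\ref{walg}).

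The hard part will be the combinatorial bookkeeping of this third step: because the three constituent variations carry different numbers of $DX$ factors, the reorganisation forces one to move $D$ derivatives between the explicit parameters and the $DX$-strings using superspace integration by parts and the field equations of (\ref{act1}), while tracking the signs coming from the Grassmann gradings. Extracting the precise factors $\tfrac{\ell!m!}{(\ell+m-1)!}$ and $\tfrac{\ell!m!}{(\ell+m-2)!}$, and in particular identifying the two $DX$-strings with the Noether currents $J_{\phi\bar\curlywedge\chi}$ and $J_\kappa$ rather than with bare tensor contractions, is where the null structure of Theorem \ref{barcwedge} does the essential work and where most of the care is needed. The Grassmann-parity check---that $a_\vkappa$ is even while $a_{\overrightarrow{\phi\bar\curlywedge\chi}}$ has the parity of $\ell+m$---together with the requirement that each surviving variation be separately a symmetry of (\ref{act1}) provides a useful consistency control throughout.
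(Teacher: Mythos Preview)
Your outline is essentially the paper's proof, which is stated in two sentences: a direct computation of the right-hand side of (\ref{comm}), with the simplification driven by the null structure of $\phi,\chi$ and by their invariance under $\vkappa$. Your decomposition of $(L\cdot M)$ via $\phi=\bbe^-\wedge\breve\phi$, $\chi=\bbe^-\wedge\breve\chi$ and the reassembly of $\delta^{(1)},\delta^{(2)},\delta^{(3)}$ into $\delta_\vkappa+\delta_{\overrightarrow{\phi\bar\curlywedge\chi}}$ through the field-dependent parameters is exactly the intended mechanism, and your identification of where $d\kappa=i_\vkappa H$ enters is correct.

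Two small points worth tightening. First, the paper explicitly flags the invariance ${\cal L}_\vkappa\phi=0$ (and likewise for $\chi$) as one of the two simplifying ingredients; you use only the algebraic null conditions (\ref{nullcon}). In the heterotic setting this Lie-derivative condition is not a purely algebraic consequence of (\ref{nullcon}) but follows from $\hat\nabla\phi=0$ together with the restriction $\kappa\wedge i_\vkappa H\vert_{\mathfrak{k}^\perp}=0$, so you should invoke it explicitly when the $H$-dependent terms in $N(\vec\phi,\vec\chi)$ and in $DJ_\kappa$ are matched. Second, the reorganisation is an \emph{off-shell} identity between variations; no use of the sigma-model field equations of (\ref{act1}) is required, only the covariant-constancy conditions $\hat\nabla\kappa=\hat\nabla\phi=\hat\nabla\chi=0$ and the superspace relation $D^2=i\partial_\tau$. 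Dropping the appeal to field equations will make the argument cleaner and avoids the appearance of an on-shell closure.
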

\begin{proof}
The result follows after a direct computation of the right hand side of the commutator (\ref{comm}). The simplification is due to both the null property of the
forms that generate the symmetries as well as their invariance properties under the action of $\vkappa$.  Note that symmetry generated by $\kappa$ commutes with those generated by $\phi$ and $\chi$.
\end{proof}

The algebra of fundamental forms $\mathfrak{f}^{\mathcal H}_{\bar\wedge}$  of non-compact holonomy groups, like those in table \ref{tablehola},  is abelian.  This is because all fundamental forms are null along $\kappa$ . Alternatively, one can use   the commutator (\ref{walg}) and
consider $\mathfrak{f}^{\mathcal H}_{\bar\curlywedge}$ instead. As the operation $\bar\curlywedge$ satisfies the Jacobi identities and all the fundamental forms of the groups tabulated in \ref{tablehola}  have odd degree, $\mathfrak{f}^{\mathcal H}_{\bar\curlywedge}$  is a Lie algebra.  As the symmetry generated by $\kappa$ commutes with all the other symmetries generated by the remaining fundamental forms, let us consider $\mathring{\mathfrak{f}}^{\mathcal H}_{\bar\curlywedge}\defeq \mathfrak{f}^{\mathcal H}_{\bar\curlywedge}-\bR\langle \kappa\rangle$.  Clearly, the Lie algebra  $\mathfrak{f}^{\mathcal H}_{\bar\curlywedge}=\mathring{\mathfrak{f}}^{\mathcal H}_{\bar\curlywedge}\oplus\bR\langle \kappa\rangle$.

\begin{theorem}\label{ncomalg1}
The Lie algebras $\mathring{\mathfrak{f}}^{\mathcal H}_{\bar\curlywedge}$ of the fundamental forms of non-compact holonomy groups ${\mathcal H}$ tabulated in table \ref{tablehola} are given in table
\ref{tablefnulla}.
\end{theorem}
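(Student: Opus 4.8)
The plan is to reduce the determination of each Lie algebra $\mathring{\mathfrak{f}}^{\mathcal H}_{\bar\curlywedge}$ to a finite, explicit computation of inner-derivation products of the \emph{transverse} fundamental forms of the compact factor ${\mathcal K}$. The point of departure is theorem \ref{ncomcom}, which shows that the commutator of the two holonomy symmetries generated by $\vec\phi$ and $\vec\chi$ closes on $\delta_\kappa$ together with $\delta_{\overrightarrow{\phi\bar\curlywedge\chi}}$. Since the symmetry generated by $\kappa$ commutes with all the others, $\kappa$ is central; and since every fundamental form in table \ref{tablehola} has odd degree, the operation $\cdot\bar\curlywedge\cdot$ endows $\mathfrak{f}^{\mathcal H}_{\bar\curlywedge}$ with a genuine (rather than super) Lie bracket. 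Quotienting out the central generator then equips $\mathring{\mathfrak{f}}^{\mathcal H}_{\bar\curlywedge}$ with the induced bracket, and it suffices to compute the brackets among the remaining fundamental forms.

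Next I would make the bracket concrete using theorem \ref{barcwedge}. Writing each null fundamental form as $\phi=\bbe^-\wedge\breve\phi$, one has $\phi\bar\curlywedge\chi=\bbe^-\wedge i_{\vec{\breve\phi}}\breve\chi$, so the transverse representative of the bracket is exactly the inner-derivation product $i_{\vec{\breve\phi}}\breve\chi$ of the transverse representatives. Because $\bbe^-\wedge(\cdot)$ is an injective linear map intertwining the brackets, this exhibits $\mathring{\mathfrak{f}}^{\mathcal H}_{\bar\curlywedge}$ as isomorphic to the inner-derivation algebra $\mathfrak{f}^{\mathcal K}_{\bar\wedge}$ of the \emph{compact} structure group ${\mathcal K}\subset Spin(8)$ acting on the transverse $\bR^8$; the $\breve\phi$ are precisely the (even-degree) fundamental forms of the ${\mathcal K}$-structure on $\bR^8$, as follows from the realisation of the $\hat\nabla$-parallel spinors of table \ref{tablehola} as forms and the bilinear construction. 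As the $\bar\wedge$-algebra is universal, independent of the Gray--Hervella class, these products are fixed by the pure algebra of ${\mathcal K}$ on $\bR^8$.

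I would then carry out the computation case by case. For each ${\mathcal K}$ I would list the transverse fundamental forms --- the Cayley $4$-form for $Spin(7)$, the Hermitian $2$-form together with the complex volume $4$-form for $SU(4)$, the triple of Hermitian $2$-forms for $Sp(2)$, and the correspondingly enlarged sets of Hermitian forms for $Sp(1)\times Sp(1)$, $Sp(1)$ and $U(1)$ --- and evaluate $i_{\vec{\breve\phi}}\breve\chi$ using the algebraic identities these forms satisfy: the quaternionic relations $I_rI_s=-\delta_{rs}+\epsilon_{rst}I_t$ among the compatible complex structures in the $Sp$ cases, the associated complex-structure composition in the $U(1)$ and $SU(4)$ cases, and the self-contraction identities of the Cayley form in the $Spin(7)$ case. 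This produces the structure constants directly; reattaching $\bbe^-\wedge$ and reading off which linear combinations close then yields the brackets recorded in table \ref{tablefnulla}, after which each bracket is matched to its abstract Lie-algebra isomorphism type.

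The main obstacle is the closure check in the third step. A priori $i_{\vec{\breve\phi}}\breve\chi$ need not lie in the span of the $\breve\phi$ themselves, so for the lower-rank groups $Sp(1)$ and $U(1)$ --- where ${\mathcal K}$ fixes the most forms and the fundamental set is largest --- one must verify either that the products remain inside $\mathfrak{f}^{\mathcal K}$ or else identify the extra $\hat\nabla$-covariantly constant generators that closure forces, precisely the enlargement phenomenon flagged for $\mathfrak{f}^{\mathcal H}_{\bar\wedge}$ in section 2. The $Spin(7)$ case is computationally the most delicate, since it relies on the self-contraction identities of the Cayley form rather than on simple quaternionic bookkeeping. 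The remaining difficulty is purely one of signs and normalisations, so that the antisymmetry guaranteed abstractly by the antisymmetry of $[\delta_L,\delta_M]$ is made manifest in the final brackets.
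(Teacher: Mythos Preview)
Your approach is essentially the same as the paper's: reduce via theorems \ref{ncomcom} and \ref{barcwedge} to the transverse $\bar\wedge$-algebra of the compact factor ${\mathcal K}$, then compute case by case. The paper's own proof is largely a pointer to \cite{gpjp1}, together with the remark that for all groups except $\mathrm{Spin}(7)$ and $SU(4)$ the fundamental forms are null $3$-forms (i.e.\ transverse $2$-forms), so closure among $2$-forms under $\bar\wedge$ is automatic.

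Two small corrections to your expectations. First, the closure issue is in $SU(4)$, not in $Sp(1)$ or $U(1)$: for the latter the fundamental forms are all transverse $2$-forms and $2$-forms close into $2$-forms under $\bar\wedge$, whereas for $SU(4)$ the bracket $\chi_1\bar\curlywedge\chi_2$ produces the non-minimal element $\curlywedge^3\omega$, giving the central extension $\hat{\mathfrak e}(2)$. Second, $\mathrm{Spin}(7)$ is not delicate but trivial: since the Cayley form is the sole generator and the bracket is antisymmetric on odd-degree null forms, $\phi\bar\curlywedge\phi=0$ without any contraction identities, yielding $\bR$ immediately.
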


\begin{proof}
The proof of this result has been given in \cite{gpjp1}. Note that apart from $\mathring{\mathfrak{f}}^{\mathrm {Spin}(7)}_{\bar\curlywedge}$ and $\mathring{\mathfrak{f}}^{SU(4)}_{\bar\curlywedge}$, the fundamental forms of the rest of the groups are null 3-forms along $\kappa$.
$\mathring{\mathfrak{f}}^{SU(4)}_{\bar\curlywedge}$ is generated by the null forms $\omega$ and $\chi$ represented by the Hermitian and $(4,0)$ fundamental forms of $SU(4)$. Its closure requires the inclusion of $\curlywedge^3\omega$.  $\mathring{\mathfrak{f}}^{Sp(2)}_{\bar\curlywedge}$ is generated by the three null forms represented by  Hermitian forms of the $Sp(2)$ hyper-complex structure. A similar analysis leads to the identification of the remaining groups.
\end{proof}

\begin{table}[h]
 \begin{center}
\begin{tabular}{|c||c|c|c|c|c|c|c|}\hline
${\mathcal K}$&$\mathrm{Spin }(7)$ & $SU(4)$& $Sp(2)$&$\times^2Sp(1)$&$Sp(1)$&$U(1)$&$\{1\}$
 \\ \hline
 $\mathring{\mathfrak{f}}^{\mathcal H}_{\bar\curlywedge} $&\bR  & $\hat{\mathfrak{e}}(2) $ &$\mathfrak{sp}(1) $&$\oplus^2\mathfrak{sp}(1) $&$\mathfrak{so}(5) $&$\mathfrak{u}(4) $ &$\mathfrak{so}(8) $
\\ \hline
\end{tabular}
\end{center}
\vskip 0.2cm
\vskip 0.2cm
\caption{\label{tablefnulla}  {\small
In the first row, the ${\mathcal K}$ subalgebras of holonomy groups ${\mathcal H}={\mathcal K}\ltimes\bR^8$ of the supersymmetric heterotic backgrounds are stated. In the second row, the associated Lie algebras of the fundamental forms $\mathring{\mathfrak{f}}^{\mathcal H}_{\bar\curlywedge}$ are given. $\hat{\mathfrak{e}}(2) $ denotes the central extension of the Euclidean group in two dimensions.} }
\end{table}

\begin{remark}

Note that for a generic holonomy group, ${\mathcal H}$,   $\mathfrak{f}^{\mathcal H}_{\bar\curlywedge}$ defers from both $\mathfrak{f}^{\mathcal H}_{\bar\wedge}$ and $\mathfrak{f}^{\mathcal H}_{N}$.  We have already mentioned that $\mathfrak{f}^{\mathcal H}_{\bar\wedge}$ is always abelian. Also $\mathfrak{f}^{\mathcal H}_{N}$ depends on the choice of connection.  For example if the torsion $H$ vanishes, the Nijenhuis tensor of all the fundamental forms vanishes as well. In such a case,
$\mathfrak{f}^{\mathcal H}_{N}$ is abelian while  $\mathfrak{f}^{\mathcal H}_{\bar\curlywedge}$ will be given by the groups in table \ref{tablefnulla}.

\end{remark}

As the $\bar\curlywedge$ can be defined on null forms along $\kappa$ of any degree, one can also consider the (super)algebras $\mathfrak{f}^{\mathcal H}_{\bar\curlywedge}$, where ${\mathcal H}$ are described in section \ref{lorexn}. Observe that   $\kappa$ commutes with all the other fundamental forms.  So again, we consider $\mathring{\mathfrak{f}}^{\mathcal H}_{\bar\curlywedge}$ to simplify the description of the algebraic structure.

\begin{theorem}\label{ncomalg2}
The Lie (super)algebras $\mathring{\mathfrak{f}}^{\mathcal H}_{\bar\curlywedge}$ of the fundamental forms of non-compact holonomy groups, ${\mathcal K}\ltimes \bR^*$, for ${\mathcal K}= SO(n), U(n), SU(n), Sp(n)$ and $Sp(n)\cdot Sp(1)$  are given in table
\ref{tablefnullb}.
\end{theorem}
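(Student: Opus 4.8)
The plan is to reduce the $\bar\curlywedge$-algebra of the null fundamental forms to an inner-derivation computation on the transverse space, exactly as in the proof of theorem \ref{ncomalg1}. By theorem \ref{barcwedge}, for two null fundamental forms $\lambda,\phi\in\Omega^*_{\kappa,\hat\nabla}$ one has $\lambda\bar\curlywedge\phi=\bbe^-\wedge i_{\vec{\breve\lambda}}\breve\phi$, where $i_{\vec{\breve\lambda}}\breve\phi=\breve\lambda\bar\wedge\breve\phi$ is the ordinary inner-derivation bracket of the representatives $\breve\lambda,\breve\phi$, which are forms on the transverse $\bR^p$. First I would invoke the construction of section \ref{lorexn}: by design each representative $\breve\phi$ is one of the fundamental forms of the compact Berger factor ${\mathcal K}$ acting on its vector representation $\bR^p$. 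Since the correspondence $\phi\leftrightarrow\breve\phi$ is linear and one-to-one, and $\curlywedge$-powers of a lift $\bbe^-\wedge\breve\phi$ reproduce the lifts of the $\wedge$-powers of $\breve\phi$, the map $\phi\mapsto\breve\phi$ intertwines $\bar\curlywedge$ with the transverse $\bar\wedge$. The problem thus reduces to computing the $\bar\wedge$-closure of the standard fundamental forms of ${\mathcal K}$ and transporting it back up by $\bbe^-\wedge(\cdot)$; the decoupling of $\kappa$ recorded before theorem \ref{ncomalg1} lets me work with $\mathring{\mathfrak{f}}^{\mathcal H}_{\bar\curlywedge}$ throughout.

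Next I would carry out the transverse computation case by case, choosing in each instance a compatible Hermitian or quaternionic frame. For ${\mathcal K}=SO(n)$ the only invariant form is the volume form, whose self-bracket lands in a degree exceeding $n$ and hence vanishes, giving an abelian algebra. For ${\mathcal K}=U(n)$ and $SU(n)$ the generators are the Hermitian form $\omega$ (with $\vec\omega$ the complex structure $I$), its $\wedge$-powers, and, for $SU(n)$, the complex volume form; here $i_I\omega$ is symmetric and so vanishes as a form, while the mixed brackets among the $\omega^k$ and the volume forms are evaluated directly from $I^2=-\mathrm{id}$ and the $(n,0)$-type of the volume form. For ${\mathcal K}=Sp(n)$ the three Hermitian forms $\omega_r$ satisfy $\omega_r\bar\wedge\omega_s=i_{I_r}\omega_s\propto\epsilon_{rst}\,\omega_t$ because $I_r^{\top}I_s=-I_rI_s=-\epsilon_{rst}I_t$, so they close onto $\mathfrak{sp}(1)$. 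For ${\mathcal K}=Sp(n)\cdot Sp(1)$ the single fundamental $4$-form $\Omega$ must be fed through $\Omega\bar\wedge\Omega=i_{\vec\Omega}\Omega$ and its iterates, and the minimal set of additional $\hat\nabla$-covariantly constant forms needed for closure recorded.

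Finally I would identify the resulting structure constants with the abstract Lie (super)algebras listed in table \ref{tablefnullb}, and read off the $\bZ_2$-grading from the degrees of the generating forms: a lift $\bbe^-\wedge\breve\phi$ of a transverse $q$-form has odd total degree iff $q$ is even, so odd-degree generators yield an ordinary Lie algebra and even-degree ones a genuine superalgebra, exactly as in theorem \ref{ncomalg1}. The Jacobi identities are automatic since $\bar\curlywedge$ satisfies them (theorem \ref{barcwedge}) and, independently, since the brackets arise from the closing commutator (\ref{walg}) of genuine symmetries. I expect the main obstacle to be the $Sp(n)\cdot Sp(1)$ case and, more generally, the closure bookkeeping: unlike the $Sp(n)$ case, where the generators already form a Lie algebra, the higher-degree generators (the powers $\omega^k$ for $U(n)$ and $SU(n)$, and the $\curlywedge$-iterates of $\Omega$) can require adjoining further covariantly constant forms before the bracket closes, and correctly pinning down this minimal closed set -- and hence the precise isomorphism type, including any central extensions -- is the delicate step.
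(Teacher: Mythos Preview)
Your overall strategy is the paper's strategy: lift $\bar\curlywedge$ to the transverse $\bar\wedge$ via theorem \ref{barcwedge}, compute case by case on the standard fundamental forms of each $\mathcal K$, and then read off the (super)algebra. The reduction, the decoupling of $\kappa$, and the grading observation (odd-degree null forms sit in the Lie-algebra sector) are all exactly what the paper uses.

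Where you go off is in judging which cases are delicate. You include the $\wedge$-powers of $\omega$ among the \emph{generators} for $U(n)$; by definition \ref{fforms} the fundamental set is just $\omega$, and since $\omega$ lifts to an odd (degree-$3$) null form your own grading remark gives $\omega\bar\curlywedge\omega=0$ immediately, so $\mathring{\mathfrak f}^{\mathcal H}_{\bar\curlywedge}=\bR$ with nothing to adjoin. The same remark disposes of $Sp(n)\cdot Sp(1)$: the single fundamental $4$-form lifts to a $5$-form, odd, hence bosonic in the $\bar\curlywedge$-grading, and antisymmetry forces $\phi\bar\curlywedge\phi=0$; the algebra is abelian $\bR$ and there are no ``iterates'' to chase. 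So the case you flag as the main obstacle is in fact the easiest.

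The genuinely nontrivial case is $SU(n)$, and here the paper makes a distinction you do not: the complex volume form lifts to a null $(n{+}1)$-form, so its parity flips with the parity of $n$. For $n=2k$ you get an honest Lie algebra and the closure element $\curlywedge^{\,n-1}\omega$ appears as a \emph{central} extension, yielding $\hat{\mathfrak e}(2)$; for $n=2k{+}1$ the $\chi_r$ are even-degree (fermionic) and $\chi_r\bar\curlywedge\chi_r\neq0$, producing the one-dimensional $N{=}2$ supersymmetry algebra $\mathfrak s(2)$. Your plan would uncover this once you compute, but your write-up does not anticipate the split, and misallocating the difficulty suggests you have not yet internalised how the grading controls which self-brackets can be nonzero.
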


\begin{proof}
The result follows from a direct computation. In particular for ${\mathcal K}=SO(n)$,  $\mathring{\mathfrak{f}}_{\bar\curlywedge}=\bR\langle \epsilon\rangle$, where the $(n+1)$-form $\epsilon$ is represented the volume fundamental form $n$-form $\breve \epsilon$ of $SO(n)$ in directions transverse to the lightcone. Clearly, $\epsilon\bar\curlywedge  \epsilon=0$.

For ${\mathcal K}=U(n)$, $\mathring{\mathfrak{f}}_{\bar\curlywedge}=\bR\langle\omega\rangle$, where the 3-form $\omega$ is represented by the usual Hermitian 2-form $\breve\omega$ of  $U(n)$. Again, the Lie algebra structure on $\mathring{\mathfrak{f}}_{\bar\curlywedge}$ is abelian.

For ${\mathcal K}=SU(n)$, $\mathring{\mathfrak{f}}_{\bar\curlywedge}=\bR\langle  \omega, \chi_1, \chi_2\rangle$, where the 3-form $\omega$ is as in $U(n)$ case, and
$\chi_1$ and $\chi_2$ are $(n+1)$-forms represented by real and imaginary components of the  fundamental (n,0)-form  $\breve\chi$ of $SU(n)$. In particular, they are normalised as $\breve\chi_1=(\epsilon, \bar\epsilon)$ and $\breve\chi_2=(-i \epsilon, i \bar\epsilon)$.  The Lie algebra structure of  $\mathring{\mathfrak{f}}_{\bar\curlywedge}$ depends on whether $n$ is even or odd.  If $n=2k$, then  $\mathring{\mathfrak{f}}_{\bar\curlywedge}$ is a Lie algebra with non-vanishing commutation relations given by
\bea
&& \omega\bar\curlywedge  \chi_1=-n  \chi_2~,~~~ \omega\bar\curlywedge  \chi_2=n  \chi_1~,~~~
\cr
&& \chi_1\bar\curlywedge  \chi_2=-{2\over (n-1)!} \curlywedge^{n-1}  \omega~.
\eea
Note that this is the Euclidean algebra, $\hat{\mathfrak{e}}(2)$,  with a central extension  given by the generator $\curlywedge^{n-1}  \omega$.  This generator  is a non-minimal element which is required for the closure of the Lie algebra.

Next, if $n=2k+1$, then $\mathring{\mathfrak{f}}_{\bar\curlywedge}$ is a superalgebra with (anti)commutation relations
\bea
&& \omega\bar\curlywedge  \chi_1=-n  \chi_2~,~~~ \omega\bar\curlywedge  \chi_2=n  \chi_1~,
\cr
&& \chi_1\bar\curlywedge  \chi_1= \chi_2\bar\curlywedge  \chi_2={2\over (n-1)!} \curlywedge^{n-1}  \omega~.
\eea
 This is isomorphic to $N=2$ supersymmetry algebra, $\mathfrak{s}(2)$ in one dimension with $\chi_1$ and $\chi_2$ the supersymmetry generators, $\curlywedge^{n-1} \omega$ the hamiltonian and $\omega$ the R-symmetry generator that rotates  the two supersymmetry charges.

For ${\mathcal K}=Sp(n)$, one finds that $\mathring{\mathfrak{f}}_{\bar\curlywedge}=\mathfrak{sp}(1)$, i.e. it is the same Lie algebra as that we have stated for $n=2$.  Finally, for ${\mathcal K}=Sp(n)\cdot Sp(1)$, $\mathring{\mathfrak{f}}_{\bar\curlywedge}=\bR\langle \phi\rangle$, where the 5-form $\phi$ is represented by the fundamental 4-form of $Sp(n)\cdot Sp(1)$, i.e.  $\mathring{\mathfrak{f}}_{\bar\curlywedge}$ is abelian.

\end{proof}

\begin{table}[h]
 \begin{center}
\begin{tabular}{|c||c|c|c|c|c|c|c|}\hline
${\mathcal K}$&$SO(n)$&$U(n)$ & $SU(2k)$& $SU(2k+1)$& $Sp(n)$&$Sp(n)\cdot Sp(1)$
 \\ \hline
 $\mathring{\mathfrak{f}}^{\mathcal H}_{\bar\curlywedge} $&\bR  & $\bR $ &$\hat{\mathfrak{e}}(2) $&$\mathfrak{s}(2) $&$\mathfrak{sp}(1) $&$\bR $
\\ \hline
\end{tabular}
\end{center}
\vskip 0.2cm
\vskip 0.2cm
\caption{\label{tablefnullb}  {\small
In the first row, the ${\mathcal K}$ subalgebras of holonomy groups ${\mathcal H}={\mathcal K}\ltimes\bR^8$  are stated. In the second row, the associated Lie algebras, $\mathring{\mathfrak{f}}^{\mathcal H}_{\bar\curlywedge} $,  of the fundamental forms are given. } }
\end{table}

\begin{remark}

For completeness, the superalgebra $\mathring{\mathfrak{f}}^{\mathcal H}_{\bar\curlywedge}$, for ${\mathcal H}=G_2\ltimes\bR^7$, is given by the relations
$\varphi\bar\curlywedge\varphi=-\phi$ with the remaining commutators to vanish, where the 4-form $\varphi$ is represented with the fundamental $G_2$ 3-form $\breve\varphi$ and the 5-form $\phi$ is represented by the fundamental $G_2$ 4-form $\breve\phi$. The form $\breve\phi$  is the Hodge dual to $\breve\varphi$ in seven dimensions.
This superalgebra is isomorphic to $N=1$ supersymmetry algebra, $\mathfrak{s}(1)$,  in one dimension with $\varphi$ the supersymmetry generator and $-\phi$ the Hamiltonian generator.
\end{remark}

\section{Concluding remarks}

We have investigated some aspects of derivations on Euclidean and Lorentzian signature manifolds ${\mathcal M}^n$ that exhibit a reduction of the structure group to a subgroup ${\mathcal H}$ of the orthogonal group $SO_n$. In a variety of examples, we have identified  the Lie (super)algebra structure induced on the fundamental
forms of ${\mathcal H}$ by inner derivations. We have also pointed out that there is a close relationship between the investigation of holonomy symmetries in sigma models and the (super)algebra of inner and exterior derivations on manifolds. Guided by this, we have introduced a Lie (super)algebra operation $\bar\curlywedge$ on the space of null forms along a null 1-form $\kappa$ and identified the corresponding (super)algebras in a variety of examples.  These include the (super)algebras of heterotic geometries with non-compact holonomy groups as well as other geometries inspired by these heterotic structures.  We demonstrated that these superalgebras differ from both those induced by standard inner derivations as well as those induced by exterior derivations generated by the fundamental forms.

The extension of these results to exterior derivations on Euclidean  or Lorentzian signature manifolds with a compact holonomy group require further investigation. The main issue is that the Nijenhuis tensor of two ${\mathcal H}$-fundamental forms may not be $\nabla^{\mathcal H}$-covariantly constant, where $\nabla^{\mathcal H}$ is a connection with holonomy ${\mathcal H}$.  This potentially obstructs the closure of the (super)algebra of derivations in the space of
$\nabla^{\mathcal H}$-covariantly constant tensors. This issue can be resolved with a Gray-Hervella type of investigation to identify the classes which are compatible with the closure. It is expected that the (super)algebra  obtained will depend on the class of the underlying manifold. Some insight into the structures that can emerge in such an investigation, or  even for clues about extensions, can be seen in the exploration of the commutator of two holonomy symmetries in section \ref{sec:holsym}. It is apparent from the commutator that, in addition to the derivations associated with the fundamental forms of ${\mathcal H}$, one has to also include the exterior derivations constructed by wedging the identity vector 1-form with the fundamental forms of ${\mathcal H}$. As the derivations act on $\Omega^*({\mathcal M}^n)$, $\Omega^*({\mathcal M}^n)$ will decompose into representations of the (super)algebra of derivations. It will be of interest to understand the relationship between such decompositions and the geometric structure of the underlying manifold.






\end{document}